\theoremstyle{definition}
\newtheorem{theorem}[equation]{Theorem}
\newtheorem{corollary}[equation]{Corollary}
\newtheorem{lemma}[equation]{Lemma}
\newtheorem{proposition}[equation]{Proposition}
\theoremstyle{definition}
\newtheorem{definition}[equation]{Definition}
\newtheorem{exam}[equation]{Example}
\numberwithin{equation}{section}
\definecolor{mjo}{rgb}{0,0,.9}
\global\long\def\gen{\text{\sf GEN}}
\global\long\def\dng{\text{\sf DNG}}
\global\long\def\mex{\operatorname{mex}}
\global\long\def\nim{\operatorname{nim}}
\global\long\def\opt{\operatorname{Opt}}
\global\long\def\pty{\operatorname{pty}}
\global\long\def\type{\operatorname{type}}
\global\long\def\otype{\operatorname{otype}}
\global\long\def\spr{d}
\global\long\def\dih{\operatorname{Dih}}
\global\long\def\dcy{\delta}
\begin{document}
\setlength{\jot}{0pt} 
\title
[Impartial achievement games for generating generalized dihedral groups]
{Impartial achievement games for generating \\
generalized dihedral groups}

\author{Bret J.~Benesh}
\author{Dana C.~Ernst}
\author{N\'andor Sieben}
\subjclass[2010]{91A46, 
20D30
}

\keywords{impartial game, maximal subgroup, generalized dihedral group}

\date{\today}

\maketitle


\begin{abstract}
We study an impartial game introduced by Anderson and Harary.  This game is played by two players who alternately choose previously-unselected elements of a finite group. The first player who builds a generating set from the jointly-selected elements wins. We determine the nim-numbers of this game for generalized dihedral groups, which are of the form $\operatorname{Dih}(A)= \mathbb{Z}_2 \ltimes A$ for a finite abelian group $A$.
\end{abstract}


\section{Introduction}


Anderson and Harary~\cite{anderson.harary:achievement} introduced a pair of two-player impartial games on a nontrivial finite group $G$.  In both games, two players take turns choosing a previously-unselected element of $G$. The player who first builds a generating set from the jointly-selected elements wins the achievement game $\gen(G)$, while a player who cannot avoid building a generating set loses the avoidance game $\dng(G)$.  
The theory of these games is further developed in \cite{Barnes,BeneshErnstSiebenSymAlt,BeneshErnstSiebenDNG,ErnstSieben}. Similar algebraic games are studied in \cite{brandenburg:algebraicGames}.

For a finite abelian group $A$, the \emph{generalized dihedral group} $\dih(A)$ is the semidirect product $\mathbb{Z}_{2}\ltimes A$.
Generalized dihedral groups share many of the same properties of dihedral groups.  For example, every element of $\dih(A)$ that is not in $A$ has order $2$ and acts on $A$ by inversion.  See~\cite{IsaacsFiniteGroups} or~\cite{MR0167513} for an introduction to generalized dihedral groups.

A fundamental problem in game theory is to determine nim-numbers of impartial two-player games. 
The nim-number allows for the easy calculation of the outcome of the sum of games. A general theory on how to compute nim-numbers appears in~\cite{ErnstSieben}.  
The strategies for the avoidance game for many families of finite groups were presented in~\cite{Barnes}, and a complete theory for finding the nim-numbers for the avoidance games was developed in~\cite{BeneshErnstSiebenDNG}.   
The strategies and nim-numbers for symmetric and alternating groups were determined in~\cite{BeneshErnstSiebenSymAlt} for both the achievement and avoidance games. 

The nim-numbers for $\dng(\dih(A))$ were classified in~\cite{BeneshErnstSiebenDNG}, which found that $\dng(\dih(A))$ is $*3$ if $A$ is cyclic of odd order and $*0$ otherwise.  
The task in this paper is to determine the nim-numbers for $\gen(\dih(A))$ for all finite abelian $A$.


\begin{section}{Preliminaries}


In this section, we recall some general terminology and results from~\cite{BeneshErnstSiebenSymAlt,BeneshErnstSiebenDNG, ErnstSieben} related to impartial games, as well as the achievement game that is the focus of this paper.


\subsection{Impartial Games}


A comprehensive treatment of impartial games can be
found in~\cite{albert2007lessons,SiegelBook}. An \emph{impartial
game} is a finite set $X$ of \emph{positions} together with a collection $\{\opt(P)\subseteq X\mid P\in X\}$, where $\opt(P)$ is the set of
possible \emph{options} for a position $P$. Two players take turns replacing the current position $P$ with one of the available options in $\opt(P)$. 
The player who encounters a \emph{terminal position} with
an empty option set cannot move and therefore \emph{loses}. All games
must come to an end in finitely many turns, so we do not allow infinite
lines of play. 

The one-pile NIM game $*n$ with $n$ stones is the prototype of an impartial game. 
The set of options of $*n$ is $\opt(*n)=\{*0,\ldots,*(n-1)\}$.

The \emph{minimum excludant} $\mex(A)$ of a set $A$ of ordinals
is the smallest ordinal not contained in the set. The \emph{nim-number}
$\nim(P)$ of a position $P$ is the minimum excludant of the set
of nim-numbers of the options of $P$. That is, 
\[
\nim(P):=\mex\{\nim(Q)\mid Q\in\opt(P)\}.
\]
The minimum excludant of the empty set is $0$, so the terminal positions
of a game have nim-number $0$. 
The \emph{nim-number of a game} is the nim-number of its starting position. 
A winning strategy exists at position $P$ if and only if $\nim(P)\ne 0$.
So, the nim-number of a game determines the outcome of the game. 
We write $P=R$ if $\nim(P)=\nim(R)$, so $P=*\nim(P)$ for every impartial game $P$.


\subsection{Achievement Games for Groups}


We now give a more precise description of the achievement game $\gen(G)$ played on a finite group $G$. We also recall some definitions and results from~\cite{ErnstSieben}. 
The nonterminal positions of $\gen(G)$ are exactly the non-generating subsets of $G$, 
and the terminal positions are the generating sets $S$ of $G$ such that there is an $g \in S$ satisfying $\langle S \setminus \{g\}\rangle < G$.  
The starting position is the empty set since neither player has chosen an element yet.
The first player chooses $x_{1}\in G$, and the designated player selects $x_{k}\in G\setminus\{x_{1},\ldots,x_{k-1}\}$ at the $k$th turn.
A position $Q$ is an option of $P$ if $Q=P \cup \{g\}$ for some $g \in G \setminus P$.  The player who builds a generating set wins the game. 

The set $\mathcal{M}$ of maximal subgroups play a significant role in the game.  The last two authors~\cite{ErnstSieben} define the set \[ \mathcal{I}:=\{\cap\mathcal{N}\mid\emptyset\not=\mathcal{N\subseteq\mathcal{M}}\} \] of \emph{intersection subgroups}, which is the set of all possible intersections of maximal subgroups.  The smallest intersection subgroup is the Frattini subgroup $\Phi(G)$ of $G$, which is the intersection of all maximal subgroups of $G$. 

\begin{exam}
\label{running}
Let $G=\langle r,s \mid r^4=e=s^2,sr^3=rs\rangle$ be the dihedral group of order 8 with identity $e$. The maximal subgroups $\langle r \rangle$, $\langle s,sr^2 \rangle$, and $\langle sr,sr^3 \rangle$ are the order 4 intersection subgroups. The only other intersection subgroup is the Frattini subgroup $\langle r^2 \rangle$, which has order 2.
\end{exam}

The set $\mathcal{I}$ of intersection subgroups is partially
ordered by inclusion. We use interval notation to denote certain subsets
of $\mathcal{I}$. For example, if $I\in\mathcal{I}$, then $(-\infty,I):=\{J\in\mathcal{I}\mid J \lneq I\}$. 

For each $I\in\mathcal{I}$ let
\[
X_{I}:=\mathcal{P}(I)\setminus\cup\{\mathcal{P}(J)\mid J\in(-\infty,I)\}
\]
be the collection of those subsets of $I$ that are not contained
in any other intersection subgroup properly contained in $I$. We let $\mathcal{X}:=\{X_{I}\mid I\in\mathcal{I}\}$
and call an element of $\mathcal{X}$ a \emph{structure class}.  
We define an additional structure class $X_G$ to be the set of terminal positions, and we let $\mathcal{Y}:=\mathcal{X} \cup \{X_G\}$.   
For any position $P$ of $\gen(G)$, let $\lceil P \rceil$ be the unique element of $\mathcal{I}\cup \{G\}$ such that $P\in X_{\lceil P \rceil}$.  
Note that $\lceil P \rceil$ is the smallest intersection subgroup containing $P$ if $P$ is not a terminal position.  We will write $\lceil A,g_1,\ldots,g_n\rceil$ to mean $\lceil A \cup \{g_1,\ldots,g_n\}\rceil$ for $A \subseteq G$ and $g_1,\ldots,g_n \in G$.

\begin{exam}
We continue Example~\ref{running}. The structure classes in $\mathcal{X}$ are 
\begin{align*}
X_{\langle r^2 \rangle} &= \{\emptyset, \{e\}, \{r^2\}, \{e, r^2\} \}\\ 
X_{\langle r \rangle} &= \mathcal{P}(\langle r \rangle)\setminus X_{\langle r^2 \rangle} \\
X_{\langle s,sr^2 \rangle} &= \mathcal{P}(\langle s,sr^2 \rangle)\setminus X_{\langle r^2 \rangle}\\ 
X_{\langle sr,sr^3 \rangle} &= \mathcal{P}(\langle sr,sr^3 \rangle)\setminus X_{\langle r^2 \rangle}.
\end{align*}
Thus, for example $\lceil \emptyset  \rceil=\langle r^2 \rangle=\lceil e \rceil$, $\lceil r \rceil = \langle r \rangle$, and $\lceil s \rceil=\langle s ,sr^2\rangle = \lceil s,sr^2 \rceil$.  
\end{exam}

Parity plays a crucial role in the theory of impartial games.
We define the \emph{parity of a natural number $n$} via $\pty(n):=(1+(-1)^{n+1})/2$. The \emph{parity of a subset of a group} is defined to be the parity of the size of the subset.
 The \emph{parity of a structure class} is defined to be $\pty(X_{I}):=\pty(I)$.  We will say an object to be \emph{even} if its parity is $0$ and \emph{odd} if its parity is $1$. 

The partition $\mathcal{Y}$ 
of the set of game positions of $\gen(G)$ is compatible with
the option relationship between game positions~\cite[Corollary~3.11]{ErnstSieben}: if $X_{I},X_{J}\in\mathcal{X}$
and $P,Q\in X_{I}\ne X_{J}$, then $\opt(P)\cap X_{J}\not=\emptyset$
if and only if $\opt(Q)\cap X_{J}\ne\emptyset$.  We say that $X_{J}$ is an \emph{option} of $X_{I}$ and we write
$X_{J}\in\opt(X_{I})$ if $\opt(I)\cap X_{J}\not=\emptyset$. 

If $P,Q\in X_{I}\in \mathcal{Y}$ and $\pty(P)=\pty(Q)$, then $\nim(P)=\nim(Q)$ by~\cite[Proposition~4.4]{ErnstSieben}.  The \emph{structure digraph} of $\gen(G)$ has vertex set $\mathcal{Y}$ and edge set $\{(X_{I},X_{J})\mid X_{J}\in\opt(X_{I})\}$.  In a \emph{structure diagram,} a structure class $X_{I}$ is represented
by a triangle pointing down if $I$ is odd and by a triangle pointing
up if $I$ is even. The triangles are divided into a smaller triangle
and a trapezoid, where the smaller triangle represents the odd positions
of $X_{I}$ and the trapezoid represents the even positions of $X_{I}$.
The numbers in the smaller triangle and the trapezoid are the nim-numbers
of these positions. There is a directed arrow from $X_{I}$ to $X_{J}$
provided $X_{J}\in\opt(X_{I})$.

\begin{figure}
\includegraphics{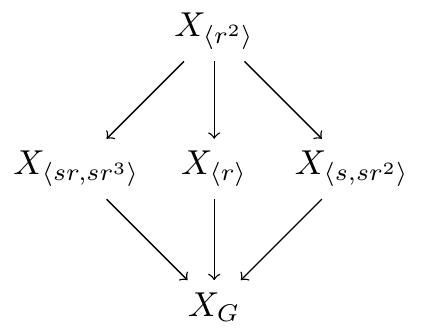}
$\qquad$
\includegraphics{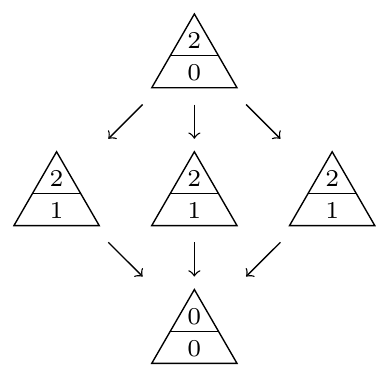}
$\qquad$
\includegraphics{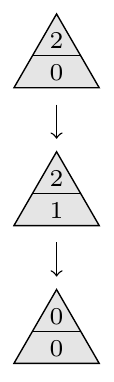}
\caption{\label{D8unsimp}A depiction of the structure digraph, the structure diagram, and the simplified structure diagram, respectively, of $\gen(G)$ for the dihedral group $G$ of order 8.}
\end{figure}

\begin{exam}
Figure~\ref{D8unsimp} shows the structure digraph and the structure diagram of $\gen(G)$ for the dihedral group $G$ of Example~\ref{running}. The numbers in the diagram were computed from the bottom up using nim-number arithmetic as depicted in Figure~\ref{fig:type}.
\end{exam}

The \emph{type} of the structure class $X_{I}$ is the triple 
\[
\type(X_{I}):=(\pty(I),\nim(P),\nim(Q)),
\]
where $P,Q\in X_{I}$ with $\pty(P)=0$ and $\pty(Q)=1$. Note that the type of a structure class $X_I$ is determined by the parity of $X_I$ and the types of the options of $X_I$ as shown in Figure~\ref{fig:type}.  
We define 
$\otype(X_I):=\{\type(X_J) \mid X_J \in \opt(X_I)\}$.

\begin{figure}
\includegraphics{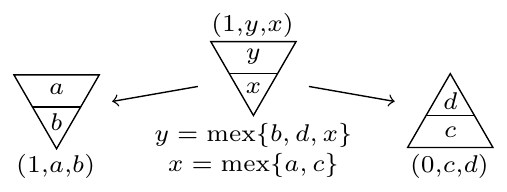} \hfil \includegraphics{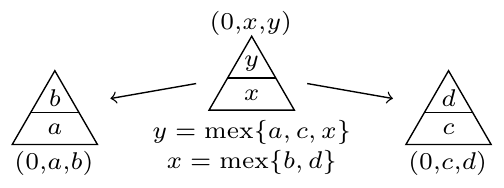}
\caption{\label{fig:type}Examples for the calculation of the type of a structure class using the types of the options.
}
\end{figure}

The nim-number of the game  is the  nim-number 
of the initial position $\emptyset$, which is an even subset of $\Phi(G)$. 
Because of this, $\nim(\gen(G))$ is the second component of 
\[
\type(X_{\Phi(G)}) =(\pty(\Phi(G)),\nim(\emptyset),\nim(\{e\})),
\] 
which corresponds to the trapezoidal 
part of the triangle representing the source vertex $X_{\Phi(G)}$ of the structure diagram.

The \emph{simplified structure diagram} of $\gen(G)$ is built from the structure diagram by identifying 
structure classes 
$X_I$ and $X_J$ satisfying $\type(X_I)=\type(X_J)$ and $\otype(X_I) \cup \{\type(X_I)\} = \otype(X_J) \cup \{\type(X_J)\}$.
This is followed by the removal of any resulting loops. See~\cite{ErnstSieben} for a more detailed description.

\begin{exam}
The simplified structure diagram of $\gen(G)$ is shown in Figure~\ref{D8unsimp} for the dihedral group $G$ of Example~\ref{running}. 
\end{exam}

\end{section}


\section{Deficiency}


We will develop some general tools in this section.  For a finite group $G$, the smallest possible size of a generating set is denoted by \[\spr(G) :=  \min\{|S| : \langle S \rangle=G\}.\]   The following definition is closely related to $\spr(G)$. 

\begin{definition}
The \emph{deficiency} of a subset $P$ of a finite group $G$ is the minimum size $\dcy(P)$ of a subset $Q$ of $G$ such that $\langle P\cup Q\rangle=G$.   
\end{definition}

Note that the deficiency of a generating set is $0$ and $\delta(\emptyset)=d(G)$.  Also, $P \subseteq Q$ implies $\dcy(P) \geq \dcy(Q)$.

\begin{proposition}\label{prop:EqualDeficiencies}
If $P,Q\in X_I$, then $\dcy(P)=\dcy(Q)$. 
\end{proposition}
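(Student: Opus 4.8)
The plan is to reduce the whole statement to a bookkeeping fact about maximal subgroups. For a subset $S\subseteq G$, let $\mathcal{M}_S:=\{M\in\mathcal{M}\mid S\subseteq M\}$ denote the collection of maximal subgroups containing $S$. The first step is the standard observation that $\langle S\rangle=G$ exactly when $S$ lies in no maximal subgroup, i.e. when $\mathcal{M}_S=\emptyset$. From this I would extract the key equivalence
\[
\langle P\cup R\rangle=G \quad\Longleftrightarrow\quad R\not\subseteq M \text{ for every } M\in\mathcal{M}_P,
\]
valid for any $R\subseteq G$: a maximal subgroup that does not already contain $P$ cannot contain $P\cup R$, so the condition $P\cup R\not\subseteq M$ is automatic off of $\mathcal{M}_P$ and amounts to $R\not\subseteq M$ on it. This identity shows that the family of completions $R$ witnessing $\langle P\cup R\rangle=G$ depends on $P$ only through $\mathcal{M}_P$, and hence so does $\dcy(P)=\min\{|R|:\langle P\cup R\rangle=G\}$.

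The second step is to show that $\mathcal{M}_P$ is in fact determined by $\lceil P\rceil$ alone. Writing $I:=\lceil P\rceil$ and recalling from the preliminaries that $I$ is the smallest intersection subgroup containing $P$, I would identify $I$ with $\cap\mathcal{M}_P$, the intersection of all maximal subgroups containing $P$ (this set is nonempty since a non-terminal $P$ is non-generating, hence sits in some maximal subgroup). Then $I=\cap\mathcal{M}_P\subseteq M$ for every $M\in\mathcal{M}_P$, so $\mathcal{M}_P\subseteq\{M\in\mathcal{M}\mid I\subseteq M\}$; conversely $I\subseteq M$ forces $P\subseteq I\subseteq M$ and hence $M\in\mathcal{M}_P$. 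This gives
\[
\mathcal{M}_P=\{M\in\mathcal{M}\mid \lceil P\rceil\subseteq M\},
\]
which depends only on $\lceil P\rceil$.

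Combining the two steps finishes the argument: if $P,Q\in X_I$ then $\lceil P\rceil=I=\lceil Q\rceil$, so the second step yields $\mathcal{M}_P=\mathcal{M}_Q$, and the first step then forces $\dcy(P)=\dcy(Q)$ (in the degenerate case $I=G$ both positions generate $G$ and $\dcy(P)=0=\dcy(Q)$). I do not expect a serious obstacle, since the result is essentially a translation of ``same smallest intersection subgroup'' into ``same set of containing maximal subgroups''; the one point needing care is the equivalence in the first step, namely verifying that adjoining $R$ to $P$ can only eliminate the maximal subgroups already in $\mathcal{M}_P$, so that whether $P\cup R$ generates is governed entirely by $\mathcal{M}_P$. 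Everything else follows from the already-recorded description of $\lceil P\rceil$ as the smallest intersection subgroup containing $P$.
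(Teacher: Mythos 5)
Your proposal is correct and follows essentially the same route as the paper: both arguments rest on the two facts that $\lceil P\rceil$ is the intersection of all maximal subgroups containing $P$ (so $P,Q\in X_I$ are contained in exactly the same maximal subgroups, namely those containing $I$), and that whether $\langle P\cup R\rangle=G$ is detected precisely by those maximal subgroups. The paper packages this as a transfer of a minimal completing set from $P$ to $Q$, while you package it as ``$\dcy(P)$ is a function of $\mathcal{M}_P$,'' but the mathematical content is identical.
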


\begin{proof}
Let $\mathcal{L}$ be the set of maximal subgroups containing $I$.  Then $I=\cap \mathcal{L}$ because $I$ is an intersection subgroup.  Let $g_1,\ldots,g_{\dcy(P)} \in G$ such that $\langle P,g_1,\ldots,g_{\dcy(P)} \rangle = G$.  Then no element of $\mathcal{L}$ contains $\{g_1,\ldots,g_{\dcy(P)}\}$.  The only maximal subgroups that contain $Q$ are also in $\mathcal{L}$. Therefore, $\langle Q,g_1,\ldots,g_{\dcy(P)}\rangle=G$ and $\dcy(Q) \leq \dcy(P)$.  By a symmetric argument, $\dcy(P) \leq \dcy(Q)$, so $\dcy(P)=\dcy(Q)$. 
\end{proof}

\begin{corollary}\label{cor:PhiDeficiency}
If $G$ is a finite group, then $\dcy(\Phi(G)) = d(G)$.
\end{corollary}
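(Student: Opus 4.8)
The plan is to deduce the corollary immediately from Proposition~\ref{prop:EqualDeficiencies} by showing that the empty set and the Frattini subgroup, viewed as a subset of $G$, lie in the same structure class. First I would observe that $\Phi(G)$ is the smallest intersection subgroup, so it is the bottom element of the poset $\mathcal{I}$ and there is no $J \in \mathcal{I}$ with $J \lneq \Phi(G)$. Consequently $(-\infty,\Phi(G)) = \emptyset$, and the definition of the structure class collapses to $X_{\Phi(G)} = \mathcal{P}(\Phi(G))$.

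Next, since both $\emptyset$ and $\Phi(G)$ are subsets of $\Phi(G)$, they both belong to $\mathcal{P}(\Phi(G)) = X_{\Phi(G)}$. Applying Proposition~\ref{prop:EqualDeficiencies} with $P = \emptyset$ and $Q = \Phi(G)$ then gives $\dcy(\Phi(G)) = \dcy(\emptyset)$. Finally, the remark following the definition of deficiency records that $\dcy(\emptyset) = d(G)$, and chaining these equalities yields $\dcy(\Phi(G)) = d(G)$.

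There is essentially no serious obstacle here: the only thing to be careful about is the bookkeeping that $\Phi(G) \in \mathcal{I}$ and that it is the minimum element, which is exactly the fact recalled earlier that the smallest intersection subgroup is the Frattini subgroup. An alternative, more self-contained route avoids the structure-class machinery entirely and uses the defining property of $\Phi(G)$ as the set of non-generators: if $\langle \Phi(G) \cup Q\rangle = G$, then each element of $\Phi(G)$ is redundant, so $\langle Q\rangle = G$ and hence $|Q| \geq d(G)$; conversely, any minimal generating set $Q$ of size $d(G)$ satisfies $\langle \Phi(G) \cup Q\rangle = G$. Either way the equality $\dcy(\Phi(G)) = d(G)$ follows in one short step, so I would favor the first approach precisely because it showcases Proposition~\ref{prop:EqualDeficiencies} in action.
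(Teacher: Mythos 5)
Your proposal is correct and takes essentially the same route as the paper: both place $\emptyset$ and $\Phi(G)$ in the structure class $X_{\Phi(G)}$ and apply Proposition~\ref{prop:EqualDeficiencies} together with $\dcy(\emptyset)=d(G)$. Your write-up merely makes explicit the bookkeeping (that $\Phi(G)$ is the minimum of $\mathcal{I}$, so $X_{\Phi(G)}=\mathcal{P}(\Phi(G))$) that the paper's one-line proof leaves implicit.
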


\begin{proof}
Since $\emptyset,\Phi(G)\in X_{\Phi(G)}$, we have $\dcy(\Phi(G))=\dcy(\emptyset)=d(G)$.
\end{proof}

\begin{definition}
We say that a structure class $X_I\in\mathcal{Y}$ is {\it $m$-deficient in} $G$ and we write $\dcy(X_I)=m$ if $\dcy(I)=m$. 
\end{definition}

The terminal structure class is $0$-deficient. That is, $\dcy(X_G)=0$.

\begin{definition}
Let $G$ be a finite group, $\mathcal{E}$ be the set of even structure classes and $\mathcal{O}$ be the set of odd structure classes in $\mathcal{Y}$.
We define the following sets:
\[
\begin{aligned}
\mathcal{D}_m &:= \{ X_I\in \mathcal{Y} \mid \dcy(X_I)=m \}, & \mathcal{D}_{\geq m} &:= \cup_{k \geq m} \mathcal{D}_k \\ 
\mathcal{E}_m &:= \mathcal{E}\cap \mathcal{D}_m, & \mathcal{E}_{\geq m} &:= \cup_{k \geq m} \mathcal{E}_k \\ 
\mathcal{O}_m &:= \mathcal{O}\cap \mathcal{D}_m,  & \mathcal{O}_{\geq m} & := \cup_{k \geq m} \mathcal{O}_k \\
\end{aligned}
\]
\end{definition}

Note that $\mathcal{D}_0=\{X_G\}$.

\begin{proposition}\label{prop:GENemptysets}
If $G$ is a finite group, then $\mathcal{D}_{\geq d(G)+1}=\emptyset$.
\end{proposition}
\begin{proof}
There must be a generating set $Q$ for $G$ with size $d(G)$. If $X_I \in \mathcal{D}_m$, then $G=\langle Q \rangle \subseteq \langle I\cup Q \rangle$, and so $m \leq |Q| = d(G)$.   
\end{proof}

The next proposition follows immediately from Corollary~\ref{cor:PhiDeficiency}.

\begin{proposition}\label{prop:frattinilocation}
If $G$ is a finite group, then $X_{\Phi(G)} \in \mathcal{D}_{d(G)}$.
\end{proposition}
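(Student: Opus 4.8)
The plan is to prove Proposition~\ref{prop:frattinilocation} by a direct appeal to the two immediately preceding results, since the statement is essentially a repackaging of Corollary~\ref{cor:PhiDeficiency} in the language of the deficiency-stratified sets $\mathcal{D}_m$. First I would recall the definition of $\mathcal{D}_{d(G)}$, namely $\mathcal{D}_{d(G)} = \{X_I \in \mathcal{Y} \mid \dcy(X_I) = d(G)\}$, and the definition of deficiency for a structure class, $\dcy(X_I) = \dcy(I)$. Thus, to place $X_{\Phi(G)}$ in $\mathcal{D}_{d(G)}$, it suffices to verify that $\dcy(X_{\Phi(G)}) = \dcy(\Phi(G)) = d(G)$.

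The single substantive step is the equality $\dcy(\Phi(G)) = d(G)$, but this is exactly the content of Corollary~\ref{cor:PhiDeficiency}. I would simply invoke that corollary: by definition of the deficiency of a structure class, $\dcy(X_{\Phi(G)}) = \dcy(\Phi(G))$, and Corollary~\ref{cor:PhiDeficiency} gives $\dcy(\Phi(G)) = d(G)$. Chaining these equalities yields $\dcy(X_{\Phi(G)}) = d(G)$, which is precisely the condition for membership in $\mathcal{D}_{d(G)}$.

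There is no genuine obstacle here, which is consistent with the remark in the excerpt that the proposition ``follows immediately'' from Corollary~\ref{cor:PhiDeficiency}; the only thing to be careful about is that $X_{\Phi(G)}$ is a legitimate structure class in $\mathcal{Y}$, so that $\dcy(X_{\Phi(G)})$ is even defined. This is automatic since $\Phi(G) \in \mathcal{I}$ (it is the intersection of all maximal subgroups, hence the smallest intersection subgroup), so $X_{\Phi(G)} \in \mathcal{X} \subseteq \mathcal{Y}$. Consequently the proof amounts to a one-line citation, and I would write it as such rather than reproving anything.
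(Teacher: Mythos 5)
Your proof is correct and matches the paper exactly: the paper states that this proposition ``follows immediately'' from Corollary~\ref{cor:PhiDeficiency}, and your argument is precisely that one-line deduction, unpacking $\dcy(X_{\Phi(G)}) = \dcy(\Phi(G)) = d(G)$ via the definition of the deficiency of a structure class. The additional remark that $\Phi(G)\in\mathcal{I}$, so that $X_{\Phi(G)}$ is a genuine structure class, is a sensible sanity check and fully consistent with the paper's setup.
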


\begin{proposition}\label{prop:setoptions}
Let $G$ be a finite group and $m$ be a positive integer. If $X_I \in \mathcal{D}_m$, 
then $X_I$ has an option $X_K$ in $\mathcal{D}_{m-1}$, and every option of $X_I$ is in either $\mathcal{D}_m$ or $\mathcal{D}_{m-1}$.
\end{proposition}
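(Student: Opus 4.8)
The plan is to reduce both assertions to a single monotonicity estimate for the deficiency under adjoining one element, and then to transfer that estimate from the representative subgroup $I$ to the level of structure classes by invoking Proposition~\ref{prop:EqualDeficiencies}.

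First I would record the key estimate: for every subset $P$ of $G$ and every $g\in G$,
\[
\dcy(P)-1 \le \dcy(P\cup\{g\}) \le \dcy(P).
\]
The upper bound is exactly the monotonicity remark following the definition of deficiency, since $P\subseteq P\cup\{g\}$. For the lower bound I would choose a set $Q$ of size $\dcy(P\cup\{g\})$ with $\langle P\cup\{g\}\cup Q\rangle=G$; then $\{g\}\cup Q$ completes $P$ to a generating set and has size at most $\dcy(P\cup\{g\})+1$, so $\dcy(P)\le\dcy(P\cup\{g\})+1$.

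For the claim that every option lies in $\mathcal{D}_m\cup\mathcal{D}_{m-1}$, I would take an arbitrary $X_K\in\opt(X_I)$. By the definition of an option there is some $g\in G\setminus I$ with $I\cup\{g\}\in X_K$, and since $K$ also lies in $X_K$, Proposition~\ref{prop:EqualDeficiencies} gives $\dcy(K)=\dcy(I\cup\{g\})$. Applying the estimate with $P=I$ and $\dcy(I)=m$ yields $m-1\le\dcy(I\cup\{g\})\le m$, hence $X_K\in\mathcal{D}_{m-1}\cup\mathcal{D}_m$. For the existence of an option in $\mathcal{D}_{m-1}$, I would fix a completion $\{g_1,\ldots,g_m\}$ of $I$ realizing $\dcy(I)=m$. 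Because $m>0$ forces $I\ne G$, not every $g_i$ can lie in $I$, so after reindexing $g_1\in G\setminus I$; then $\langle I\cup\{g_1\}\cup\{g_2,\ldots,g_m\}\rangle=G$ shows $\dcy(I\cup\{g_1\})\le m-1$, and the lower bound forces equality. Setting $K:=\lceil I,g_1\rceil$, the position $I\cup\{g_1\}$ witnesses $X_K\in\opt(X_I)$, and $\dcy(K)=\dcy(I\cup\{g_1\})=m-1$ places $X_K$ in $\mathcal{D}_{m-1}$.

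I expect the substantive content to be the elementary estimate above; the remaining difficulty is purely bookkeeping. The main point to handle carefully is ensuring that the chosen element genuinely lies outside $I$, so that $I\cup\{g\}$ is a bona fide option rather than equal to $I$, and then correctly passing from the deficiency of the representative set $I\cup\{g\}$ to the deficiency of its structure class $X_K$ via Proposition~\ref{prop:EqualDeficiencies}. Both of these hinge on the single observation that $m>0$ means $I$ is not already a generating set of $G$.
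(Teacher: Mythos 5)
Your proposal is correct and is essentially the paper's own argument: both establish the two-sided bound $\dcy(I)-1\le\dcy(I\cup\{g\})\le\dcy(I)$ (monotonicity for the upper bound, re-adjoining $g$ to a completion of $I\cup\{g\}$ for the lower bound) to confine every option to $\mathcal{D}_m\cup\mathcal{D}_{m-1}$, and both produce an option in $\mathcal{D}_{m-1}$ by adjoining one element $g_1$ of a minimal completing set of $I$ and taking $K=\lceil I,g_1\rceil$. The differences are purely organizational---you isolate the estimate as a standalone inequality and are more explicit than the paper about $g_1\notin I$ and about transferring the deficiency of the position $I\cup\{g_1\}$ to the class $X_K$ via Proposition~\ref{prop:EqualDeficiencies}.
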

\begin{proof}
Let $X_J$ be an option of $X_I$; 
that is, $Q:=I\cup\{g\}$ is in $X_J$ for some $g\in G$. Since $I\subseteq Q$, \[\delta(X_I)=\delta(I)\ge\delta(Q)=\delta(J)=\delta(X_J).\]  Let $h_1,\ldots,h_{\delta(Q)}\in G$ such that $\langle Q,h_1,\ldots,h_{\delta(Q)}\rangle=G$.  Then $\langle I,g,h_1,\ldots,h_{\delta(Q)}\rangle=G$, which implies \[\delta(X_I)=\delta(I)\le\delta(Q)+1=\delta(X_J)+1.\]  Hence $\dcy(X_J) \in \{\dcy(X_I),\dcy(X_I)-1\}$.

Now let $g_1,\ldots,g_{\delta(I)}\in G$ such that $\langle I,g_1,\ldots,g_{\delta(I)}\rangle=G$, and let $K=\lceil I \cup \{g_1\} \rceil$. Then $\dcy(X_K) \in \{\dcy(X_I),\dcy(X_I)-1\}$.  Because $\langle I \cup \{g_1\},g_2,\ldots,g_{\delta(I)}\rangle = G$, we conclude that $\dcy(X_K)=\dcy(X_I)-1$.
\end{proof}

Note that the deficiency of a structure class $X_I$ is the directed distance from $X_I$ to the terminal structure class $X_G$ in the structure digraph.

Even structure classes only have even options by Lagrange's Theorem, so the next corollary follows by Proposition \ref{prop:setoptions}.

\begin{corollary}\label{cor:Esetoptions}
If $m$ is a positive integer and $X_I \in \mathcal{E}_m$, then $X_I$ has an option in $\mathcal{E}_{m-1}$ and every option of $X_I$ is in either $\mathcal{E}_m$ or $\mathcal{E}_{m-1}$.
\end{corollary}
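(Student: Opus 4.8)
The plan is to obtain this as a direct consequence of Proposition~\ref{prop:setoptions} together with a single parity observation. Since $X_I\in\mathcal{E}_m\subseteq\mathcal{D}_m$ and $m$ is positive, Proposition~\ref{prop:setoptions} already gives that $X_I$ has an option in $\mathcal{D}_{m-1}$ and that every option of $X_I$ lies in $\mathcal{D}_m\cup\mathcal{D}_{m-1}$. It therefore remains only to upgrade these statements from $\mathcal{D}$ to $\mathcal{E}$ by showing that every option of $X_I$ is even. Once that is established, an option in $\mathcal{D}_{m-1}$ automatically lies in $\mathcal{E}_{m-1}=\mathcal{E}\cap\mathcal{D}_{m-1}$, and every option, being even and in $\mathcal{D}_m\cup\mathcal{D}_{m-1}$, lies in $\mathcal{E}_m\cup\mathcal{E}_{m-1}$.

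The core step is thus the parity claim, which is where Lagrange's Theorem enters. Let $X_J\in\opt(X_I)$, so that $Q:=I\cup\{g\}\in X_J$ for some $g\in G$, where $J=\lceil Q\rceil\in\mathcal{I}\cup\{G\}$. The essential point is that $J$ is a subgroup of $G$ containing $I$: indeed $I\subseteq Q\subseteq J$, and both $I$ and $J$ are subgroups (each being either an intersection subgroup or $G$ itself). Since $X_I$ is even, $|I|$ is even, and because $|I|$ divides $|J|$ by Lagrange's Theorem, $|J|$ is even as well. Hence $\pty(J)=0$ and $X_J$ is even.

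Combining the two observations yields the corollary. I do not expect a genuine obstacle here; the only point that warrants care is confirming the subgroup containment $I\subseteq J$ so that Lagrange's Theorem applies, and in particular checking the edge case $m=1$, where the option guaranteed in $\mathcal{D}_0=\{X_G\}$ is the terminal class $X_G$. This case is covered by the same argument: if $J=G$ arises as an option, then $|G|$ is a multiple of the even number $|I|$, so $|G|$ is even and $X_G$ is even, placing it in $\mathcal{E}_0=\mathcal{E}_{m-1}$ as required.
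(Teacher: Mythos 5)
Your proposal is correct and takes essentially the same route as the paper, which likewise deduces the corollary from Proposition~\ref{prop:setoptions} together with the remark that even structure classes have only even options by Lagrange's Theorem. Your write-up merely makes explicit the subgroup containment $I \subseteq J$ underlying that parity observation, which the paper leaves implicit.
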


The proof of the following result uses calculations similar to those shown in Figure~\ref{fig:type}.

\begin{proposition}\label{prop:GENeventypes}
If $G$ is a group of even order, then 
\[
	   \type(X_I) = 
	     \begin{dcases}
	       (0,0,0) & \text{if } X_I=X_G \in \mathcal{E}_0\\
	       (0,1,2) & \text{if } X_I \in \mathcal{E}_1\\
	       (0,0,2) & \text{if } X_I \in \mathcal{E}_2\\
	       (0,0,1) & \text{if } X_I \in \mathcal{E}_{\geq 3}.
	     \end{dcases}
\]
\end{proposition}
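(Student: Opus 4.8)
The plan is to compute $\type(X_I)$ for every even structure class from the types of its options, using the recursion encoded in Figure~\ref{fig:type}. For an even $X_I$ that recursion reads
\[
\nim(P)=\mex\bigl(\{\nim(Q)\}\cup\{\beta_J:X_J\in\opt(X_I)\}\bigr)
\]
and
\[
\nim(Q)=\mex\bigl(\{\nim(P)\}\cup\{\alpha_J:X_J\in\opt(X_I)\}\bigr),
\]
where $P,Q\in X_I$ are an even and an odd position and $\type(X_J)=(0,\alpha_J,\beta_J)$. The terms $\nim(Q)$ and $\nim(P)$ record that adding one element of $I$ flips parity but stays in $X_I$, while adding an element outside $I$ lands in an option class with the opposite parity (so an even position reaches the odd positions $\beta_J$ of the options, and an odd position reaches the even positions $\alpha_J$). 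By Corollary~\ref{cor:Esetoptions}, every option of $X_I\in\mathcal{E}_m$ lies in $\mathcal{E}_{m-1}$ or $\mathcal{E}_m$, with at least one in $\mathcal{E}_{m-1}$; hence once the option types are known, the two equations reduce to short $\mex$ computations.

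The base case is $X_G$, which is terminal, so all its positions have nim-number $0$ and $\type(X_G)=(0,0,0)$; since $G$ has even order, $X_G\in\mathcal{E}_0$, matching the first line. For the inductive step I would split into $m=1$, $m=2$, and $m\geq 3$. Writing $T_k$ for the claimed type at deficiency $k$, in each case $\otype(X_I)\subseteq\{T_{m-1},T_m\}$ with $T_{m-1}\in\otype(X_I)$, and feeding the relevant pairs $(\alpha_J,\beta_J)$ into the two $\mex$ equations returns exactly $T_m$, whether or not a same-level option is present. For example, at $m=1$ the guaranteed contribution is $(\alpha,\beta)=(0,0)$ from $\mathcal{E}_0$, and $(\nim(P),\nim(Q))=(1,2)$ indeed solves $\nim(P)=\mex\{\nim(Q),0\}$ and $\nim(Q)=\mex\{\nim(P),0\}$; the remaining cases are equally mechanical and I would not grind through them here.

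The main obstacle is the apparent circularity from same-deficiency options: an $X_I\in\mathcal{E}_m$ may again have options in $\mathcal{E}_m$, so a naive induction on $m$ does not supply those types. I would resolve this by noting that the structure digraph is acyclic: if $X_J\in\opt(X_I)$ then $J=\lceil I\cup\{g\}\rceil\supsetneq I$ for some $g\notin I$, so every option strictly enlarges the intersection subgroup and strictly decreases the index $[G:I]$. Running the induction on $[G:I]$ (equivalently, in reverse topological order with sink $X_G$) instead of on $m$ guarantees that every option of $X_I$, in $\mathcal{E}_{m-1}$ or in $\mathcal{E}_m$, has strictly smaller index and hence a type already pinned down by the induction hypothesis as $T_{m-1}$ or $T_m$. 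The only remaining point needing care is the mutual dependence of $\nim(P)$ and $\nim(Q)$ within a class, but this is already handled by the framework of \cite{ErnstSieben} (\cite[Proposition~4.4]{ErnstSieben} together with Figure~\ref{fig:type}), so I may take the two $\mex$ equations above as given.
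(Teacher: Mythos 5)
Your proposal follows essentially the same route as the paper: a structural induction over the acyclic structure digraph, with Corollary~\ref{cor:Esetoptions} pinning each $\otype(X_I)$ down to $\{T_{m-1}\}$ or $\{T_{m-1},T_m\}$, and the type then read off from the calculation rules of Figure~\ref{fig:type}. Your induction on $[G:I]$ (reverse topological order) is exactly what the paper's phrase ``structural induction on the structure classes'' means, and your explicit justification of acyclicity makes precise a point the paper leaves implicit.

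One caveat deserves attention: the symmetric pair of $\mex$ equations you write down does not by itself determine the type, and this underdetermination occurs precisely in the cases you must handle. For $X_I\in\mathcal{E}_1$ whose only option type is $(0,0,0)$, both $(1,2)$ and $(2,1)$ solve your system; for $X_I\in\mathcal{E}_2$ whose only option type is $(0,1,2)$, both $(0,2)$ and $(1,0)$ do. What breaks the tie in the framework of \cite{ErnstSieben} is an asymmetry your equations omit: the maximal position $I\in X_I$ has no moves inside $X_I$, so the nim-value of the positions whose parity equals $\pty(I)$ is the $\mex$ of the option contributions alone (here $\nim(P)=\mex\{\beta_J\}$ for even $X_I$), after which the opposite-parity value follows; \cite[Proposition~4.4]{ErnstSieben} guarantees this is consistent with the values of the non-maximal positions. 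Since you explicitly defer the mutual dependence to that framework, your proof is correct as intended --- and with the seeded evaluation the remaining cases do check out mechanically, exactly as in the paper's case list --- but ``solving'' the two equations simultaneously, as literally stated, would not suffice on its own.
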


\begin{proof}
We will use structural induction on the structure classes.
By Corollary~\ref{cor:Esetoptions}, structure classes in $\mathcal{E}_m$ with $m \geq 1$ must have an option in $\mathcal{E}_{m-1}$ and possibly more options 
in $\mathcal{E}_{m}\cup\mathcal{E}_{m-1}$.
Hence $\mathcal{E}_m\ne\emptyset$ if and only if $m\le \spr(G)$.

If $X_I \in \mathcal{E}_0$, then $\type(X_I)=\type(X_G)=(0,0,0)$.
If $X_I \in \mathcal{E}_1$, then $\type(X_I)=(0,1,2)$ since
\[
\otype(X_I)=
\begin{dcases}
\{(0,0,0)\} & \text{if } \opt(X_I)\cap\mathcal{E}_1=\emptyset\\
\{(0,0,0),(0,1,2)\} & \text{otherwise.} 
\end{dcases}
\]
If $X_I \in \mathcal{E}_2$, then $\type(X_I)=(0,0,2)$ since
\[
\otype(X_I)=
\begin{dcases}
\{(0,1,2)\} & \text{if }  \opt(X_I)\cap\mathcal{E}_2=\emptyset\\
\{(0,1,2),(0,0,2)\} & \text{otherwise.} 
\end{dcases}
\]
If $X_I  \in \mathcal{E}_{3}$, then $\type(X_I)=(0,0,1)$ since
\[
\otype(X_I)=
\begin{dcases}
\{(0,0,2)\} & \text{if } \opt(X_I)\cap\mathcal{E}_3=\emptyset\\
\{(0,0,2),(0,0,1)\} & \text{otherwise.} 
\end{dcases}
\]
If $X_I \in \mathcal{E}_{\geq 4}$, then $\type(X_I)=(0,0,1)$, since every option of $X_I$ has type $(0,0,1)$.
\end{proof}

\begin{corollary}\label{cor:dgt4}
If $G$ is a group of even order with $\spr(G) \geq 4$, then $\gen(G)=*0$.
\end{corollary}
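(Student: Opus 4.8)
The plan is to compute the game's nim-number directly from the options of the starting position rather than reading it off the Frattini structure class. Since the options of $\emptyset$ are exactly the singletons, $\nim(\gen(G)) = \nim(\emptyset) = \mex\{\nim(\{g\}) \mid g \in G\}$ is the second (even) component of $\type(X_{\Phi(G)})$. I would show that no singleton has nim-value $0$: once every $\{g\}$ satisfies $\nim(\{g\}) \neq 0$, the set $\{\nim(\{g\}) \mid g \in G\}$ omits $0$, so its minimum excludant is $0$ and $\gen(G) = *0$.

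First I would record the deficiency bounds coming from Proposition~\ref{prop:setoptions} together with Corollary~\ref{cor:PhiDeficiency}: since $\emptyset \in X_{\Phi(G)}$ has deficiency $d(G)$ and adjoining one element drops the deficiency by at most one, $\dcy(\lceil g\rceil) \geq d(G)-1 \geq 3$ for every $g$, and $\dcy(\lceil g,h\rceil) \geq d(G)-2 \geq 2$ for every pair. Because $|G|$ is even, Cauchy's theorem supplies an involution $t \in G$.

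To establish $\nim(\{g\}) \neq 0$ I would split on the order of $g$. If $g$ has even order, then $\langle g\rangle \le \lceil g\rceil$ forces $|\lceil g\rceil|$ to be even by Lagrange, so $X_{\lceil g\rceil}$ is an even structure class; as it lies in $\mathcal{E}_{\geq 3}$, Proposition~\ref{prop:GENeventypes} gives $\type(X_{\lceil g\rceil}) = (0,0,1)$, and the odd-sized position $\{g\}$ therefore has nim-value $1$. If $g$ has odd order, then $g \neq t$, so $\{g,t\}$ is an option of $\{g\}$; since $t \in \lceil g,t\rceil$, the subgroup $\lceil g,t\rceil$ has even order, so $X_{\lceil g,t\rceil}$ is an even structure class lying in $\mathcal{E}_{\geq 2}$. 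By Proposition~\ref{prop:GENeventypes} its type is $(0,0,2)$ or $(0,0,1)$, whence the even-sized position $\{g,t\}$ has nim-value $0$; thus $\{g\}$ has an option of nim-value $0$ and $\nim(\{g\}) \geq 1$. In both cases $\nim(\{g\}) \neq 0$.

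The main obstacle is conceptual rather than computational: Proposition~\ref{prop:GENeventypes} classifies only even structure classes, yet $X_{\Phi(G)}$ itself may be odd (for instance $\Phi(\mathbb{Z}_2^4)$ is trivial), so one cannot merely quote the type of the Frattini class. The device that sidesteps any analysis of odd structure classes is the involution $t$: multiplying a singleton by $t$ always lands in an even structure class of deficiency at least $2$, whose even-sized positions are pinned to nim-value $0$ by Proposition~\ref{prop:GENeventypes}. Checking that $t$ may be taken distinct from $g$ (handled by the even-order case) and that the relevant deficiencies remain at least $2$ are the only points that require care.
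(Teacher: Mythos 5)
Your proof is correct and is essentially the paper's argument repackaged: the paper phrases it as an explicit second-player strategy (answer an involution with the identity, anything else with an involution) that lands in an even structure class of deficiency at least $2$, while you compute $\nim(\emptyset)$ as a mex over the singletons; both rest on exactly the same ingredients --- Cauchy's theorem, the fact that an element of even order forces an even structure class via Lagrange, the deficiency bounds from Proposition~\ref{prop:setoptions} and Corollary~\ref{cor:PhiDeficiency}, and the types given by Proposition~\ref{prop:GENeventypes}. The only cosmetic difference is your case split on the order of $g$ (with even order handled by reading off type $(0,0,1)$ directly rather than exhibiting a reply move) versus the paper's involution/non-involution split.
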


\begin{proof}
If the first player initially selects an involution, the second player selects the identity; otherwise, the second player selects an involution $t$.  This guarantees that the game position $P$ after the second turn satisfies 
$P\in X_I\in\mathcal{E}_{\ge d(G)-2}$ for some structure class $X_I$. Because $d(G)-2 \ge 2$, $\type(X_I)$ is $(0,0,2)$ or $(0,0,1)$ by Proposition~\ref{prop:GENeventypes}.
In both cases the even position $P$ has nim-number $0$, so the second player wins.
\end{proof}


\section{The Generalized Dihedral Group $\dih(A)$}


For a finite abelian group $A$, the \emph{generalized dihedral group} $\dih(A)$
is the semidirect product $C_{2}\ltimes A$ where the generator $x$ of the two-element cyclic group $C_2=\{1,x\}$
acts on $A$ on the right by inversion. 
That is, the operations on the elements of $C_2\times A$ are defined by 
\[
(k,a)(1,b):=(k,ab),\quad (k,a)(x,b):=(kx,a^{-1}b)
\] 
for $a,b \in A$ and $k\in C_2$.  

Note that we use multiplicative notation for group operations. 
We identify the elements of $A$ and $C_2$ with their images in $C_{2}\ltimes A$ through the natural inclusion maps $k\mapsto (k,1):C_2\hookrightarrow \dih(A)$ and $a\mapsto (1,a):A\hookrightarrow \dih(A)$ 
so that we can write $ka$ for $(k,a)$.  
This identification allows us to consider the action of $C_2$ on $A$ as conjugation since $(ka)(xb) = kxx^{-1}axb=(kx)(a^xb)$.

\begin{lemma}\label{lem:twoinvolutionstransform}
Let $S$ be a subset of $\dih(A)$ and $y \in \dih(A) \setminus A$.
If $a, b \in A$, then $\langle S, ya, yb \rangle = \langle S, a^{-1}b, yb \rangle$.  
\end{lemma}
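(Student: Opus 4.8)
The plan is to prove the equality $\langle S, ya, yb \rangle = \langle S, a^{-1}b, yb \rangle$ by showing each side contains the generators of the other, so that the generated subgroups coincide. The key structural fact I would exploit is that $y \in \dih(A) \setminus A$ forces $y$ to be an involution acting on $A$ by inversion; more precisely, for any $a \in A$ the element $ya$ again lies outside $A$ and is also an involution, and a product of two such ``reflections'' lands back in $A$. The crucial computation is to determine the product $(ya)^{-1}(yb)$ or $(yb)(ya)$ in terms of $a^{-1}b$.

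First I would record that since $y \notin A$, the element $y$ has the form $xc$ for some $c \in A$ in the notation of the paper, and $ya = xca$. Using the multiplication rule $(x,c)(1,a) = (x, ca)$ and the inversion action, I would compute the product of the two reflections $ya$ and $yb$. The natural move is $(ya)^{-1}(yb)$: since every element outside $A$ is an involution, $(ya)^{-1} = ya$, and I expect $(ya)(yb) = a^{-1}b$ after the inversion action is applied (the $y$'s combine to the identity while conjugation by $y$ inverts the intervening $A$-element). This gives $a^{-1}b \in \langle S, ya, yb\rangle$, and trivially $yb \in \langle S, ya, yb\rangle$, so the right-hand subgroup is contained in the left.

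For the reverse inclusion I would show $ya \in \langle S, a^{-1}b, yb\rangle$. Since $yb$ and $a^{-1}b$ are both available on the right-hand side, I would recover $ya$ as a product: multiplying $yb$ by the appropriate power/combination of $a^{-1}b$ (using that $A$ is abelian and the inversion action of $y$ is understood) should yield $ya$. Concretely, $yb \cdot (b^{-1}a) = ya$ where $b^{-1}a = (a^{-1}b)^{-1}$ lies in the generated subgroup, so $ya \in \langle S, a^{-1}b, yb\rangle$ and the left-hand side is contained in the right. Combining the two inclusions yields the desired equality.

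The main obstacle is bookkeeping the semidirect-product multiplication correctly, in particular tracking how the inversion action intervenes when one reflection is multiplied by another or by an element of $A$. The identity $(ka)(xb) = (kx)(a^x b)$ from the text, together with the fact that conjugation by an element outside $A$ inverts $A$, should make each product routine, but I would be careful about the order of multiplication (left versus right) so that the inversion is applied to the intended factor; getting $a^{-1}b$ rather than $ab^{-1}$ or $ba^{-1}$ depends on this. Once the single product computation $(ya)(yb) = a^{-1}b$ is verified, both inclusions follow immediately and no further machinery from the earlier sections is needed.
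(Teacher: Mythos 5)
Your proposal is correct and follows essentially the same route as the paper: both inclusions are established by the single product computation $(ya)(yb)=(yay)b=a^{-1}b$ (using that $y$ is an involution inverting $A$) together with the recovery $ya=(yb)(b^{-1}a)=(yb)(a^{-1}b)^{-1}$. The "bookkeeping" you worry about is handled exactly as you anticipate, so no further verification is needed.
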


\begin{proof}
Let $H=\langle S, ya, yb \rangle$ and $K=\langle S, a^{-1}b, yb \rangle$.  Then $S \subseteq H$, $yb \in H$, and $a^{-1}b = (yay)b = (ya)(yb) \in H$, so $K \leq H$.  Also, $S \subseteq K$, $yb \in K,$ and $ya=(yb)(b^{-1}a)=(yb)(a^{-1}b)^{-1} \in K$, so $H \leq K$.  Therefore, $H=K$.
\end{proof}

\begin{lemma}\label{lem:DihASubgroups}
If $\langle B,y \rangle=\dih(A)$ for some $B\le A$ and $y \in \dih(A) \setminus A$, then $A=B$.
\end{lemma}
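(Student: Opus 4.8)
The plan is to show that the subgroup $H := \langle B, y\rangle$ meets $A$ in exactly $B$, and then to invoke the hypothesis $H = \dih(A)$ to force $B = A$. The whole argument rests on the fact that $A$ has index $2$ in $\dih(A)$, together with the way $y$ acts on $A$, so I would not expect to need Lemma~\ref{lem:twoinvolutionstransform} here.

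First I would record two elementary facts about $y$. Since $y \in \dih(A)\setminus A$, the element $y$ acts on $A$ by inversion via conjugation, so $y b y^{-1} = b^{-1}$ for every $b \in B \le A$; because $B$ is a subgroup, $b^{-1}\in B$, and hence $y$ normalizes $B$. Moreover every element outside $A$ is an involution, so $y^2 = 1$. Combining these gives the commutation relation $yb = b^{-1}y$.

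Next I would use these relations to describe $H$ explicitly as the set $B \cup By$. Closure under products and under inverses is a short computation using $yb = b^{-1}y$ and $y^2 = 1$: the four product types $B\cdot B$, $B\cdot By$, $By\cdot B$, and $By\cdot By$ all land back in $B \cup By$, and $(by)^{-1} = b y \in By$ as well. Hence $B \cup By$ is already a subgroup, and it clearly contains $B$ and $y$, so $H = B \cup By$.

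Finally I would intersect with $A$. Since $A$ is a subgroup of index $2$ and $y \notin A$, in the quotient $\dih(A)/A \cong C_2$ every element of the coset $By$ has nontrivial image and therefore lies outside $A$, whereas $B \subseteq A$; this yields $H \cap A = B$. But $H = \dih(A)$ by hypothesis, so $H \cap A = A$, and comparing the two gives $A = B$. There is no genuinely hard step; the one point that truly uses the hypothesis $B \le A$ (rather than $B$ being an arbitrary subset) is that $y$ normalizes $B$, which is exactly what makes $B \cup By$ a subgroup, and everything else is bookkeeping with the index-$2$ subgroup $A$.
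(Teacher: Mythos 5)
Your proof is correct. The core step coincides with the paper's: both arguments rest on the fact that conjugation by $y$ inverts the elements of $B$, hence $y$ normalizes $B$, which forces $\langle B,y\rangle$ to be exactly $B\cup By$ (the paper expresses this as the product $B\langle y\rangle$, obtained from normality of $B$ in $\langle B,y\rangle=\dih(A)$, while you verify closure by hand). Where you part ways is the finish. The paper identifies $B\langle y\rangle$ with the semidirect product $\langle y\rangle\ltimes B$, recognizes it as $\dih(B)$ because $y$ acts on $B$ by inversion, and concludes $A=B$ from $\dih(A)=\dih(B)$ --- an implication that is left implicit and ultimately amounts to comparing orders, $2|A|=2|B|$ together with $B\le A$. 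You instead intersect your explicit decomposition with $A$: since every element of $By$ lies outside the index-$2$ subgroup $A$, you get $A=\dih(A)\cap A=(B\cup By)\cap A=B$ directly. Your finish is somewhat more elementary and self-contained, avoiding both the identification with $\dih(B)$ and the unstated counting step; the paper's version is shorter on the page because normality hands it the product decomposition without the four-case closure computation. Either route is a complete proof.
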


\begin{proof}
It is easily verified that $y^{-1}by=b^{-1}$ for all $b\in B$. So $\langle y \rangle$ normalizes $B$, and hence $B$ is normal in $\langle B,y\rangle = \dih(A)$.  Thus, $\dih(A) = \langle B,y \rangle
\subseteq B\langle y \rangle$, which implies $\dih(A)=B\langle y \rangle$. Since $B\cap \langle y \rangle$ is trivial, $B\langle y \rangle=\langle y \rangle \ltimes B$ after the natural identification.  Since $y$ acts on $B$ by inversion, $\langle y \rangle \ltimes B = \dih(B)$. Therefore, $\dih(A)=\dih(B)$, which shows that $A=B$. 
\end{proof}

\begin{proposition}\label{prop:NumberOfGenerators}
If $A$ is a finite abelian group, then $d(\dih(A))=d(A)+1$.
\end{proposition}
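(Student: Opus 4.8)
The plan is to prove the two inequalities $d(\dih(A)) \le d(A)+1$ and $d(\dih(A)) \ge d(A)+1$ separately. For the upper bound, suppose $\{a_1,\ldots,a_{d(A)}\}$ generates $A$ and let $y \in \dih(A)\setminus A$ be any involution (for instance the image of $x$). I claim $\{a_1,\ldots,a_{d(A)},y\}$ generates $\dih(A)$: the subgroup it generates contains all of $A$ and the element $y$, and since $A$ together with a single reflection recovers the whole semidirect product (this is exactly the content of Lemma~\ref{lem:DihASubgroups} with $B=A$), we get $\langle a_1,\ldots,a_{d(A)},y\rangle = \dih(A)$. Hence $d(\dih(A)) \le d(A)+1$.

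\medskip

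For the lower bound, let $S$ be any generating set of $\dih(A)$ of minimal size $d(\dih(A))$. Since $\dih(A) \not\le A$, the set $S$ must contain at least one element outside $A$; fix such an element $y \in S \setminus A$. The key reduction is to use Lemma~\ref{lem:twoinvolutionstransform}: each element of $S\setminus A$ has the form $ya$ for some $a \in A$ (because any two elements outside $A$ differ by an element of $A$, as $[\dih(A):A]=2$). Applying the lemma repeatedly, I can replace each such generator $ya$ (other than $y$ itself) by the element $a^{-1}\cdot 1 = a^{-1} \in A$, at each step preserving the generated subgroup. This produces a generating set of the same cardinality consisting of $y$ together with $|S|-1$ elements of $A$. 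Writing $B := \langle S \cap A \text{ together with the transformed elements}\rangle \le A$, Lemma~\ref{lem:DihASubgroups} forces $B = A$. Thus the $|S|-1$ abelian elements generate $A$, giving $d(A) \le |S|-1 = d(\dih(A))-1$, i.e. $d(\dih(A)) \ge d(A)+1$.

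\medskip

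Combining the two inequalities yields $d(\dih(A)) = d(A)+1$. The main obstacle I expect is the bookkeeping in the lower bound: I must be careful that the repeated application of Lemma~\ref{lem:twoinvolutionstransform} does not change the number of generators and genuinely converts all but one of the reflection-type generators into elements of $A$. Concretely, if $S \setminus A = \{y, ya_2, \ldots, ya_k\}$, then one application of the lemma (with $S$-role played by the remaining generators) rewrites $\langle \ldots, y, ya_i, \ldots\rangle$ as $\langle \ldots, y, a_i^{-1}, \ldots \rangle$; iterating this over $i=2,\ldots,k$ strips every extra reflection down to an abelian element without altering $y$, so exactly one element outside $A$ survives. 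Once that reduction is justified, the appeal to Lemma~\ref{lem:DihASubgroups} to conclude that the abelian part must already generate all of $A$ is immediate, and the count closes.
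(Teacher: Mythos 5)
Your proof is correct and follows essentially the same route as the paper: the upper bound by adjoining a single element of $\dih(A)\setminus A$ to a minimal generating set of $A$, and the lower bound by repeatedly applying Lemma~\ref{lem:twoinvolutionstransform} to convert all but one of the generators outside $A$ into elements of $A$, then invoking Lemma~\ref{lem:DihASubgroups} to conclude that the abelian part generates $A$. One small correction to the upper bound: Lemma~\ref{lem:DihASubgroups} goes in the wrong direction for the use you make of it (it asserts that $\langle B,y\rangle=\dih(A)$ implies $B=A$, not the converse); what you actually need there is simply that $A$ has index $2$ in $\dih(A)$, so any subgroup containing $A$ and an element outside $A$ is the whole group, which is exactly how the paper argues.
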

\begin{proof}
Let $a_1,\ldots,a_{d(A)} \in A$ such that $\langle a_1,\ldots,a_{d(A)}\rangle =A$.  Since $|\dih(A):A|=2$, \[\langle a_1,\ldots,a_{d(A)},y\rangle=\dih(A)\] for all $y \in \dih(A) \setminus A$, so $d(\dih(A)) \leq d(A)+1$.  

Now let $n=d(\dih(A))$ and  $g_1,\ldots,g_n \in \dih(A)$ such that $\langle g_1,\ldots,g_n \rangle = \dih(A)$.  Without loss of generality, let $g_n \in \dih(A) \setminus A$.  
By possible repeated use of Lemma~\ref{lem:twoinvolutionstransform}
we may find $B:=\langle b_1,\ldots,b_{n-1}\rangle \leq A$ such that $\langle b_1,\ldots,b_{n-1},g_n \rangle = \langle g_1,\ldots,g_n\rangle.$    
Then $\langle B,g_n \rangle=\dih(A)$, so $B=A$ by Lemma~\ref{lem:DihASubgroups}.  Therefore, $d(A) \leq n-1 = d(\dih(A))-1$.   
\end{proof}

The following result is likely well-known, but we will include the proof for want of a reference.

\begin{proposition}\label{prop:FrattiniInclusion}
If $G=\dih(A)$, then $\Phi(G)=\Phi(A)$.  
\end{proposition}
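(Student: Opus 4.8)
The plan is to prove the two inclusions $\Phi(G)\subseteq\Phi(A)$ and $\Phi(A)\subseteq\Phi(G)$ by first classifying the maximal subgroups of $G=\dih(A)$. The structural fact I would exploit throughout is that $A$ is normal of index $2$, so (as recorded earlier) every $y\in G\setminus A$ is an involution acting on $A$ by inversion. Consequently, for any subgroup $N\le A$ the set $\langle N,y\rangle=N\cup yN$ is again a subgroup, equal to $\dih(N)$ with $\langle y\rangle$ acting on $N$ by inversion, and it satisfies $\dih(N)\cap A=N$. Note also that $A$ itself is a maximal subgroup of $G$ because $[G:A]=2$.

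Next I would establish a lattice correspondence. Fixing $y\in G\setminus A$, I claim that for a subgroup $N_0\le A$ the map $N\mapsto\dih(N)$ is an inclusion-preserving bijection from the subgroups $N$ with $N_0\le N\le A$ onto the subgroups $H$ with $\dih(N_0)\le H\le G$ that do \emph{not} contain $A$, while the only subgroup containing both $A$ and an element outside $A$ is $G$ itself (since $\langle A,y\rangle=G$). Indeed, given $H$ with $\dih(N_0)\le H\le G$, either $A\le H$, forcing $H=G$, or $A\not\le H$, in which case $y\in H\setminus A$ gives $[H:H\cap A]=2$ and hence $H=\dih(H\cap A)$; that $\langle\dih(N_0),N\rangle=\dih(N)$ for $N_0\le N\le A$ uses that $y$ normalizes each such $N$ by inversion. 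Applying this with $N_0$ trivial shows that the maximal subgroups of $G$ are exactly $A$ together with the subgroups $\dih(M)$ for $M$ maximal in $A$: on one hand $[G:\dih(M)]=[A:M]$ is prime, so each $\dih(M)$ is maximal; on the other hand a maximal subgroup $M'\ne A$ must satisfy $A\not\le M'$, whence $M'=\dih(M'\cap A)$ with $M'\cap A$ maximal in $A$.

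Finally I would intersect. Since $\dih(M)=M\cup yM$ with $yM\subseteq G\setminus A$, intersecting over all maximal subgroups of $G$ gives
\[
\Phi(G)=A\cap\bigcap_{M}\dih(M)=\bigcap_{M}\bigl(A\cap\dih(M)\bigr)=\bigcap_{M}M=\Phi(A),
\]
where $M$ ranges over the maximal subgroups of $A$. I expect the main obstacle to be the converse half of the classification, namely showing that a maximal subgroup $M'$ of $G$ with $A\not\le M'$ necessarily has $M'\cap A$ maximal in $A$ (so that it contains $\Phi(A)$); this is precisely where the lattice correspondence does the real work, and the care needed is in verifying $\langle M',N\rangle=\dih(N)$ for $M'\cap A\le N\le A$, which hinges on $y$ acting on each intermediate $N$ by inversion.
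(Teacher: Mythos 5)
Your proposal is correct and is essentially the paper's own proof: both arguments rest on the same two facts---that $\langle L,y\rangle$ is maximal in $G$ for each maximal $L\le A$ and $y\in G\setminus A$, and that every maximal subgroup $M'\ne A$ of $G$ meets $A$ in a maximal subgroup of $A$---which the paper simply asserts and you actually prove via the index-two lattice correspondence before intersecting. One small caution: your notation $\dih(M)$ hides the choice of $y$, and for a \emph{fixed} $y$ the subgroups $\langle M,y\rangle$ do not exhaust the maximal subgroups other than $A$ (in the dihedral group of order $8$, both $\langle s,sr^2\rangle$ and $\langle sr,sr^3\rangle$ meet $A=\langle r\rangle$ in the same maximal subgroup $\langle r^2\rangle$); this is harmless here, since $A\cap\langle M,y'\rangle=M$ for every $y'\in G\setminus A$, so the maximal subgroups omitted by fixing $y$ do not change the intersection defining $\Phi(G)$.
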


\begin{proof}
Since $A$ is a maximal subgroup of $G$, $\Phi(G)$ is contained in $A$.  

We will first prove that $\Phi(A)$ is contained in $\Phi(G)$.  Let $z$ be in $\Phi(A)$ and $M$ be a maximal subgroup of $G$.  
If $M=A$, then $z$ is certainly in $M$, so we will assume that $M$ is not equal to $A$.  
Then $M \cap A$ is maximal in $A$, so $z \in M \cap A \subseteq M$.  Therefore, $\Phi(A) \subseteq \Phi(G)$.  

We now prove that $\Phi(G)$ is contained in $\Phi(A)$.  Let $z$ be in $\Phi(G)$ and $L$ be a maximal subgroup of $A$.  
Then $M= \langle x \rangle \ltimes L$ 
is a maximal subgroup of $G$, so $z \in M$.   
Since $z$ is in $\Phi(G)$ and $A$ is a maximal subgroup of $G$, $z$ is in $A$.
Therefore, $z \in M \cap A = L$.  
Therefore, $\Phi(G) \leq \Phi(A)$, and we conclude that $\Phi(G)=\Phi(A)$.    
\end{proof}


\section{The Achievement Game $\gen(\dih(A))$}


We now determine the nim-number of $\gen(\dih(A))$. We consider three cases: $d(A)=1$, $d(A)\geq 3$, and $d(A)=2$.  If $d(A)=1$, then $A$ is a cyclic group and $\dih(A)$ is a usual dihedral group characterized by the following result.

\begin{proposition}\cite[Corollary~7.10]{ErnstSieben}\label{prop:DihedralResults}
If $A$ is a cyclic group of order $n$, then 
\[
\gen(\dih(A))=
\begin{dcases}
*0          & \text{if } n \equiv_4 0 \\
*1          & \text{if } n \equiv_4 2 \\
*3          & \text{if } n \equiv_2 1. \\
\end{dcases}
\]
\end{proposition}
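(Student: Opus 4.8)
The plan is to reduce the whole statement to the computation of the single type $\type(X_{\Phi(G)})$ for $G=\dih(A)$, and then read off $\nim(\gen(G))$ as its second (even) component. Since $A$ is cyclic we have $d(A)=1$, hence $d(G)=2$ by Proposition~\ref{prop:NumberOfGenerators}; consequently every structure class has deficiency $0$, $1$, or $2$ by Proposition~\ref{prop:GENemptysets}, and $X_{\Phi(G)}\in\mathcal{D}_2$ by Proposition~\ref{prop:frattinilocation}. By Proposition~\ref{prop:FrattiniInclusion} we have $\Phi(G)=\Phi(A)=\langle r^{k}\rangle$, where $r$ generates $A$ and $k=\operatorname{rad}(n)$ is the product of the distinct primes dividing $n$, so $|\Phi(G)|=n/\operatorname{rad}(n)$. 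The three cases of the statement correspond exactly to the parity of this order: it is even precisely when $4\mid n$, and odd when $n$ is odd or $n\equiv_4 2$. Because the even component of $\type(X_{\Phi(G)})$ equals $\nim(\emptyset)=\nim(\gen(G))$, this single number is all I must determine.

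First I would catalog the intersection subgroups together with their deficiencies and parities. A direct analysis (using Lemmas~\ref{lem:twoinvolutionstransform} and~\ref{lem:DihASubgroups}) shows that every intersection subgroup is either a rotation subgroup $\langle r^j\rangle\le A$ or a generalized dihedral subgroup $\dih(B)$ with $B\le A$; that $A$ and every dihedral intersection subgroup have deficiency $1$ (adjoin one reflection, respectively one rotation); and that every proper rotation subgroup $\langle r^j\rangle\lneq A$ has deficiency $2$. Each dihedral class has even order $2|B|$, while $A$ has parity $\pty(n)$. Proposition~\ref{prop:GENeventypes} then supplies the types of all even classes for free: dihedral classes and, when $n$ is even, the class $X_A$ have type $(0,1,2)$, and even proper rotation classes have type $(0,0,2)$. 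In particular, when $4\mid n$ the class $X_{\Phi(G)}$ is even of deficiency $2$, i.e. $X_{\Phi(G)}\in\mathcal{E}_2$, so Proposition~\ref{prop:GENeventypes} gives $\type(X_{\Phi(G)})=(0,0,2)$ and hence $\gen(G)=*0$ with no further work.

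The two odd cases carry the real content, since the excerpt computes types only for even classes. I would record the type-calculation rule of Figure~\ref{fig:type} in the form I need: writing $v_0,v_1$ for the even and odd nim-values of a class $X_I$ of parity $\pi$, one has $v_\pi(X_I)=\mex\{v_{1-\pi}(X_J):X_J\in\opt(X_I)\}$ and $v_{1-\pi}(X_I)=\mex(\{v_\pi(X_I)\}\cup\{v_\pi(X_J):X_J\in\opt(X_I)\})$, the self-term reflecting the move that stays inside $X_I$. For $n$ odd all rotation classes are odd and there are no even rotation classes; the options of $X_A$ are only $X_G$, giving $\type(X_A)=(1,2,1)$, and an induction down the subgroup lattice of $A$ (each proper odd rotation class has $X_A$ and some dihedral class among its options, with larger odd rotation classes handled inductively) shows that every proper odd rotation class, including $X_{\Phi(G)}$, has type $(1,3,0)$, so $\gen(G)=*3$. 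For $n\equiv_4 2$ with $n>2$ I would single out the unique index-two rotation subgroup $\langle r^2\rangle$, which is odd and maximal in $A$, hence of type $(1,1,0)$, and which is always an option of $X_{\Phi(G)}$; an induction then shows every odd rotation class has odd-component in $\{0,2\}$ (never $1$), so the $\langle r^2\rangle$-option contributes the value $0$ while nothing contributes $1$, forcing the even component of $\type(X_{\Phi(G)})$ to equal $1$ and giving $\gen(G)=*1$. The base case $n=2$ (the Klein four-group) I would verify directly.

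The hard part is this final bookkeeping for $n\not\equiv_4 0$: with Proposition~\ref{prop:GENeventypes} unavailable for odd classes, I must track how odd rotation classes, even rotation classes, and dihedral classes interact as options under the mex-recursion. The delicate invariant is that odd rotation classes always have odd-component in $\{0,2\}$, never $1$; establishing it (by induction on the subgroup lattice, using Proposition~\ref{prop:setoptions} to bound the deficiencies of options) is precisely what makes the answer for $n\equiv_4 2$ collapse to the single value $*1$ regardless of how many intermediate rotation subgroups $\Phi(G)$ has.
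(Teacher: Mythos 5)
Your proposal is essentially correct, but it takes a genuinely different route from the paper, because the paper contains no proof of this proposition at all: it is imported verbatim as Corollary~7.10 of the cited Ernst--Sieben paper, and the machinery developed here (in particular Lemmas~\ref{lem:GENoddsmalleroddoptions}--\ref{lem:oddGENoddtypes}) is deliberately restricted to $\spr(A)=2$ precisely so that the $\spr(A)=1$ case can be outsourced. You instead re-derive the result inside this paper's deficiency framework, and your computations check out: the $n\equiv_4 0$ case is an immediate application of Propositions~\ref{prop:frattinilocation} and~\ref{prop:GENeventypes}; your odd-$n$ analysis recovers the types $(1,2,1)$ and $(1,3,0)$ of Lemma~\ref{lem:oddGENoddtypes} one deficiency level down; and your $n\equiv_4 2$ analysis is material that appears nowhere in the paper's lemmas --- the classes of type $(1,1,0)$ and $(1,1,2)$ you encounter are visible only in the conjectural diagram of Figure~\ref{fig:genevenpow1}(b). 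What your approach buys is self-containedness and evidence that the deficiency method handles $\spr(A)=1$ as well; what the paper's citation buys is brevity and no duplication of the literature.

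Two caveats. First, in the $n\equiv_4 2$ induction your stated invariant (odd component in $\{0,2\}$) is not self-sustaining: the odd component of a class is the $\mex$ of the \emph{even} components of its options, so ruling out values $2$ and $3$ there requires control of the even components of the odd rotation options as well. The induction must carry the joint invariant that every odd rotation class has type $(1,1,0)$ or $(1,1,2)$; your $\mex$ argument for $X_{\Phi(G)}$ proves exactly the even-component half, but it has to be folded into the induction at every level rather than applied only at the end. Second, both the quoted statement and your proof silently assume $n\ge 2$: for $n=1$ one has $\dih(A)\cong\mathbb{Z}_2$ and $\Phi(G)=A$, so your own computation $\type(X_A)=(1,2,1)$ gives $\gen(\dih(A))=*2$, not $*3$, and the phrase ``every proper odd rotation class, including $X_{\Phi(G)}$'' fails. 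That defect is inherited from the statement as quoted, not introduced by you, but a complete write-up should exclude or separately compute the trivial case.
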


\begin{proposition}\label{prop:ManyGenerated}
If $A$ is an abelian group such that $\spr(A) \geq 3$, then $\gen(\dih(A))=*0$.
\end{proposition}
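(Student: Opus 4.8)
The plan is to reduce the statement immediately to Corollary~\ref{cor:dgt4}, which already disposes of every even-order group whose minimum generating set has size at least $4$. The only work is to verify that $G=\dih(A)$ satisfies both hypotheses of that corollary.

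First I would observe that $\dih(A)=C_2\ltimes A$ has order $2|A|$, which is even, so $G$ has even order. Next, Proposition~\ref{prop:NumberOfGenerators} gives $\spr(\dih(A))=\spr(A)+1$, so the hypothesis $\spr(A)\ge 3$ forces $\spr(\dih(A))\ge 4$. With both hypotheses of Corollary~\ref{cor:dgt4} in hand, that corollary applies directly and yields $\gen(\dih(A))=*0$.

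The main point is that there is no genuine obstacle here: the entire combinatorial content has already been packaged into Corollary~\ref{cor:dgt4}, whose proof supplies the second player with a winning strategy (play the identity or an involution to drive the position into $\mathcal{E}_{\ge \spr(G)-2}$, where $\spr(G)-2\ge 2$ guarantees nim-number $0$ via Proposition~\ref{prop:GENeventypes}). The only subtlety worth flagging is notational, namely remembering that $\spr=\spr$ is the minimum generating set size $d$, so that the hypothesis $\spr(A)\ge 3$ translates correctly through Proposition~\ref{prop:NumberOfGenerators} into the required bound $\spr(\dih(A))\ge 4$.
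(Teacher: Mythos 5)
Your proof is correct and follows essentially the same route as the paper: combine Proposition~\ref{prop:NumberOfGenerators} (to get $\spr(\dih(A))=\spr(A)+1\ge 4$) with the evenness of $\dih(A)$, and invoke Corollary~\ref{cor:dgt4}. Your version is slightly more explicit about citing Proposition~\ref{prop:NumberOfGenerators}, which the paper leaves implicit, but the argument is identical.
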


\begin{proof}
Since $\spr(A) \geq 3$, $\spr(\dih(A)) \geq 4$. The result follows by Corollary~\ref{cor:dgt4} because $\dih(A)$ is even.
\end{proof}

We now focus on the case when $\spr(A)=2$.  We start by considering the case when $A$ is even.
Figures~\ref{fig:genevenpow1}(b) and \ref{fig:genevenpow23}(a) show our conjectured two possible simplified structure diagrams for this situation.

\begin{proposition} \label{prop:Even2Generated}
If $A$ is an abelian group of even order such that $\spr(A)=2$, then $\gen(\dih(A))=*0$.
\end{proposition}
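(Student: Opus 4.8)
The plan is to exhibit an explicit winning strategy for the second player, which shows that $\nim(\gen(\dih(A)))=0$. Write $G=\dih(A)$. By Proposition~\ref{prop:NumberOfGenerators} we have $d(G)=d(A)+1=3$, so $\delta(\emptyset)=3$ and, by Proposition~\ref{prop:setoptions}, every one-element position satisfies $\delta(\{x_1\})\in\{2,3\}$. The key observation driving the strategy is that every even structure class of deficiency at least $2$ has even-position nim-number $0$: by Proposition~\ref{prop:GENeventypes} its type is $(0,0,2)$ or $(0,0,1)$. Thus it suffices to show that, whatever $x_1$ the first player selects, the second player can reply with some $x_2$ so that the size-$2$ position $\{x_1,x_2\}$ lies in an even structure class of deficiency at least $2$; such a position has nim-number $0$, so the first player is then to move from a $\nim$-$0$ position and loses.

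I would first record a bookkeeping identity: for a subgroup $I\le A$, generating $G$ from $I$ costs exactly one reflection more than generating $A$ from $I$. Writing $\delta_A(I)$ for the least number of elements of $A$ one must adjoin to $I$ to generate $A$, we have $\delta(I)=\delta_A(I)+1$. This holds because every generating set of $G$ containing $I$ must contain an element outside $A$ (a reflection, which is an involution since every element of $G\setminus A$ has order $2$), while a single reflection together with $\delta_A(I)$ suitable elements of $A$ already generates $G$. With this in hand the strategy splits on $\delta(\{x_1\})$. If $\delta(\{x_1\})=3$, the second player plays any involution $x_2\ne x_1$ (these exist in abundance); adjoining one element drops the deficiency by at most $1$, so $\delta(\{x_1,x_2\})\ge 2$, and the class is even since it contains an involution. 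If $\delta(\{x_1\})=2$ and $\lceil x_1\rceil$ is even, the second player plays $x_2=e$ (here $x_1\ne e$, since $\delta(\{e\})=3$ by Corollary~\ref{cor:PhiDeficiency}); the generated subgroup, hence the structure class and its deficiency, are unchanged, and the class is even. The remaining case is $\delta(\{x_1\})=2$ with $\lceil x_1\rceil$ odd; then $x_1\in A$, the subgroup $\langle x_1\rangle$ has odd order, and the identity above forces $\delta_A(\langle x_1\rangle)=1$. Here the second player plays an involution $v\in A$ chosen so that $\langle x_1,v\rangle\ne A$: since $v$ has order $2$ the class $\lceil x_1,v\rceil$ is even, and since $\langle x_1,v\rangle\ne A$ we get $\delta_A(\langle x_1,v\rangle)\ge 1$, whence $\delta(\{x_1,v\})\ge 2$.

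The main obstacle, and the only place where the hypotheses that $A$ is even and $d(A)=2$ are essential, is producing the involution $v\in A$ with $\langle x_1,v\rangle\ne A$ in this last case. I would prove its existence by splitting on the rank of the Sylow $2$-subgroup $A_2$. If $A_2$ has rank at least $2$, then for any involution $v\in A$ the group $\langle x_1,v\rangle$ has $2$-rank at most $1$ (as $\langle x_1\rangle$ is odd), so it cannot equal $A$. If $A_2$ has rank $1$, then $d(A)=2$ forces the odd part of $A$ to be noncyclic of rank $2$; since $\langle x_1\rangle$ is cyclic it cannot contain the full odd part, so $\langle x_1,v\rangle\ne A$ for the unique involution $v$. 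In either case a suitable $v$ exists.

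Assembling the three cases shows that after the second player's reply the position always lies in an even structure class of deficiency at least $2$, hence has nim-number $0$ by Proposition~\ref{prop:GENeventypes}, so the second player wins and $\gen(\dih(A))=*0$. I expect the bookkeeping identity $\delta(I)=\delta_A(I)+1$ and the rank dichotomy to be the two steps requiring the most care; the nim-number conclusion itself is immediate once the target position is reached. It is worth noting that the argument genuinely uses that $A$ is even precisely to supply the involution $v\in A$ in the last case, which is exactly the feature that should fail when $A$ has odd order.
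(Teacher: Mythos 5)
Your proof is correct and uses the same strategic frame as the paper's---after the second player's reply, the position should lie in an even structure class of deficiency at least $2$, where Proposition~\ref{prop:GENeventypes} forces type $(0,0,2)$ or $(0,0,1)$ and hence nim-number $0$ for even positions---but it reaches that goal by a genuinely different route in the one hard case. The paper cases only on the parity of $I=\lceil x_1\rceil$: if $I$ is even, the second player plays any element of $I\setminus\{x_1\}$ (the class is unchanged and the position becomes even); if $I$ is odd, the second player plays \emph{any} involution $t\in A$, and the coprimality trick $\langle x_1,t\rangle=\langle x_1t\rangle$ shows that $\lceil x_1,t\rceil=\lceil x_1t\rceil$ is the class of a one-element position, hence lies in $\mathcal{D}_2\cup\mathcal{D}_3$ by Proposition~\ref{prop:setoptions}, with no structure theory of $A$ needed. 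You instead case on $\delta(\{x_1\})$, and in the hard case ($\delta(\{x_1\})=2$ with $\lceil x_1\rceil$ odd) you produce an involution $v\in A$ with $\langle x_1,v\rangle\ne A$ by a Sylow-$2$ rank dichotomy, then convert this into $\delta(\{x_1,v\})\ge 2$ via your relative-deficiency identity $\delta(I)=\delta_A(I)+1$. The paper's argument is shorter and more uniform; yours isolates a reusable fact (the identity) and makes the structural reason for the reply's existence explicit, at the cost of two extra lemmas.

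Two steps in your sketch need more care, though both are fillable with tools already in the paper. First, the ``$\ge$'' half of $\delta(I)=\delta_A(I)+1$ does not follow merely from the observation that every generating set of $G$ containing $I$ has an element outside $A$: a minimal completion of $I$ may consist \emph{entirely} of reflections (for instance, three reflections generate $\dih(\mathbb{Z}_3\times\mathbb{Z}_3)$), so you must trade all but one of them for elements of $A$ using Lemma~\ref{lem:twoinvolutionstransform} and then invoke Lemma~\ref{lem:DihASubgroups} to conclude that the resulting elements of $A$ generate $A$ over $I$; this is exactly the proof of Proposition~\ref{prop:NumberOfGenerators} relativized to $I$. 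Second, in the rank-one subcase you implicitly use that $d(A)=\max_p d(A_p)$ over the Sylow subgroups of the abelian group $A$ to conclude the odd part is noncyclic; this is standard but should be stated. With those repairs your argument is sound.
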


\begin{figure}
\includegraphics{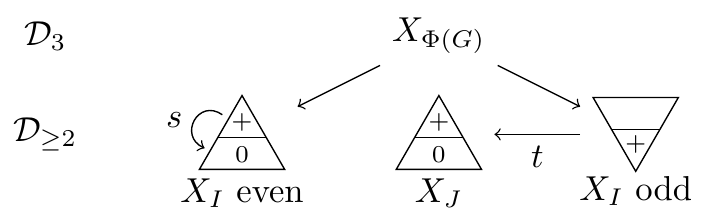}
\caption{\label{fig:Even2Generated}Visual representation for the proof of Proposition~\ref{prop:Even2Generated}.}
\end{figure}

\begin{proof}
Because $\spr(A)=2$, we know that $\spr(\dih(A))=3$ by Proposition~\ref{prop:NumberOfGenerators}, and so the initial position $\emptyset$ has deficiency 3.
We show that the second player can create a position with nim-number 0 in the second turn, no matter what the first player chooses in the first turn. This will show that the second player has a winning strategy and
the nim-number of the game is 0. 

Suppose the first player chooses $g \in \dih(A)$, and let $I=\lceil g \rceil$.  Then $X_I$ is in $\mathcal{D}_2$ or $\mathcal{D}_3$ by Proposition~\ref{prop:setoptions}.  
We have provided a visualization in Figure~\ref{fig:Even2Generated}.

If $X_{I}$ is in $\mathcal{E}_2$ or $\mathcal{E}_3$, then the second player picks any element $s\in I\setminus \{g\}$ to move to a position with nim-number $0$ by Proposition~\ref{prop:GENeventypes} since $\type(X_{I})$ is either $(0,0,1)$ or $(0,0,2)$.   

Now assume that $X_I$ is in $\mathcal{O}_2$ or $\mathcal{O}_3$.  Because every element in $\dih(A)\setminus A$ is an involution, we conclude that $I$ is an odd subgroup of $A$ and $g$ has odd order.   
The second player then selects an involution $t \in A$, and we consider a new structure class $X_J$ for $J=\lceil I,t \rceil$.  
Because $\gcd(|g|,|t|)=1$ and $A$ is abelian, $\langle g,t \rangle = \langle gt \rangle$ and hence $J=\lceil gt \rceil$.  
Therefore, $X_J$ is in $\mathcal{E}_2$  or $\mathcal{E}_3$.  Since $\type(X_{J})$ is either $(0,0,1)$ or $(0,0,2)$ by Proposition~\ref{prop:GENeventypes}, the second player moves to a position with nim-number $0$ by choosing $t$.
\end{proof}

We now consider the case when $A$ is odd.

\begin{lemma}\label{lem:GENoddsmalleroddoptions}
Let $G=\dih(A)$ where $A$ is an abelian group of odd order and $\spr(A)=2$.  If $X_I \in \mathcal{O}_m$ with $m \in \{2,3\}$, then $X_I$ has an option in $\mathcal{O}_{m-1}$. 
\end{lemma}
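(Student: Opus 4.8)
The plan is to reduce the problem to generation inside $A$ and then to choose the deficiency-dropping move so that it lies in $A$, which will automatically keep the resulting structure class odd.

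First I would record the parity constraint. Since $X_I$ is odd, $I$ is a subgroup of odd order; because every element of $\dih(A)\setminus A$ is an involution, $I$ contains no element of order $2$ and hence $I\le A$. The mechanism that keeps us among the odd classes is the following observation: if $B\le A$ and $b\in A$, then $\langle B,b\rangle\le A$, and since $A$ is itself an intersection subgroup, $\lceil B,b\rceil\le A$; thus any structure class obtained by adjoining an element of $A$ to a subset of $A$ is again odd.

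Next I would produce a minimal completion of $I$ that is concentrated in $A$. By definition of deficiency there are $g_1,\ldots,g_m\in G$ with $\langle I,g_1,\ldots,g_m\rangle=G$. As $G\ne A$, at least one $g_i$ lies outside $A$; and whenever two of the $g_i$ lie outside $A$, one application of Lemma~\ref{lem:twoinvolutionstransform} with $y=x$ replaces one of them by an element of $A$ while preserving both the generated subgroup and the total count. Repeating this, exactly as in the proof of Proposition~\ref{prop:NumberOfGenerators}, I may assume $g_1,\ldots,g_{m-1}\in A$ and $g_m\in G\setminus A$. Since $m\ge 2$, the element $g_1$ lies in $A$. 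I would then set $K:=\lceil I,g_1\rceil$ and verify it is the desired option: it is odd by the previous paragraph; the relation $\langle K,g_2,\ldots,g_m\rangle=G$ gives $\dcy(K)\le m-1$, while Proposition~\ref{prop:setoptions} forces every option of $X_I$ into $\mathcal{D}_m\cup\mathcal{D}_{m-1}$, so $\dcy(K)=m-1$; and the same computation gives $g_1\notin I$ (otherwise $K=I$ would have deficiency $m$), so $X_K$ is a genuine option. Hence $X_K\in\mathcal{O}_{m-1}$.

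The step that needs care—the main obstacle—is the reduction in the third paragraph. Proposition~\ref{prop:setoptions} by itself only supplies \emph{some} deficiency-reducing option, which might be obtained by adjoining an involution and would then land in an even class; the purpose of the Lemma~\ref{lem:twoinvolutionstransform} rearrangement is precisely to trade all but one of the outside generators for elements of $A$, so that a generator inside $A$ remains available to drop the deficiency. The hypotheses $\spr(A)=2$ and $m\in\{2,3\}$ enter only through the inequality $m\ge 2$, which guarantees that at least one of the $m-1$ inside-generators exists; the argument itself works for any odd $X_I$ of deficiency at least $2$.
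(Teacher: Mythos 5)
Your proof is correct and follows essentially the same route as the paper's: both hinge on Lemma~\ref{lem:twoinvolutionstransform} to trade generators lying outside $A$ for elements of $A$, and then adjoin such an element of $A$ to $I$ to drop the deficiency by one while keeping the structure class odd. The only difference is one of packaging—the paper case-splits on whether some $g_i$ already has odd order and applies the lemma a single time, whereas you normalize the entire completing set as in Proposition~\ref{prop:NumberOfGenerators}; your write-up also spells out the small checks (that $g_1\notin I$ and that the deficiency is exactly $m-1$ via Proposition~\ref{prop:setoptions}) which the paper leaves implicit.
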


\begin{proof}
Recall that $C_2=\langle x\rangle$.
Since $X_I \in \mathcal{O}_m$, there are distinct $g_1,\ldots,g_m$ such that $\langle I, g_1, \ldots, g_m \rangle = G$.  Additionally, $I \leq A$ because $I$ is odd.  
If $g_i$ has odd order for some $i$, 
then $g_i \in A$ and $X_{\lceil I, g_i \rceil} \in \mathcal{O}_{m-1}$.

Now assume that all $g_1,\ldots,g_m$ have even order.  
Since $g_1$ and $g_2$ are involutions, there are $a, b \in A$ such that $g_1=xa$ and $g_2 = xb$.  
Then $\langle g_1, g_2 \rangle = \langle xa, xb \rangle = \langle a^{-1}b, xb \rangle$ if $m=2$ and $\langle g_1,g_2,g_3 \rangle =\langle a^{-1}b,g_2,g_3\rangle$ if $m=3$ 
by Lemma~\ref{lem:twoinvolutionstransform}, so $X_{\lceil I, a^{-1}b \rceil} \in \mathcal{O}_{m-1}$.  
\end{proof}

\begin{lemma}\label{lem:GENoddevenoptions}
Let $G=\dih(A)$ where $A$ is an abelian group of odd order such that $\spr(A)=2$.  If $X_I \in \mathcal{O}_m$ with $m \in \{1,2,3\}$, 
then $X_I$ has an option in $\mathcal{E}_{m-1}$ but not in $\mathcal{E}_{m}$. 
\end{lemma}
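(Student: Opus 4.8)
The plan is to reduce every deficiency computation to the deficiency of $I$ measured \emph{inside} $A$. Since $A$ has odd order, every element of $G\setminus A$ is an involution, so an odd structure class $X_I$ forces $I\le A$. Write $\dcy_A(I)$ for the smallest number of elements of $A$ one must adjoin to $I$ in order to generate $A$. The key preliminary identity I would establish is
\[
\dcy(I)=\dcy_A(I)+1,
\]
so that the hypothesis $X_I\in\mathcal{O}_m$ translates into $\dcy_A(I)=m-1$. Both halves of the statement then become short counting arguments.

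To prove this identity, I would begin with a minimal family $g_1,\dots,g_m$ satisfying $\langle I,g_1,\dots,g_m\rangle=G$. At least one $g_i$ must lie outside $A$, since $I$ together with elements of $A$ generates only a subgroup of $A$. After relabeling so that $g_1=xa_1\notin A$, I would apply Lemma~\ref{lem:twoinvolutionstransform} repeatedly (with $y=x$) to replace every other generator lying outside $A$ by an element of $A$, keeping $g_1$ fixed; this produces $\langle I,xa_1,h_2,\dots,h_m\rangle=G$ with $h_2,\dots,h_m\in A$. Setting $B=\langle I,h_2,\dots,h_m\rangle\le A$ and applying Lemma~\ref{lem:DihASubgroups} to $\langle B,xa_1\rangle=G$ yields $B=A$, whence $\dcy_A(I)\le m-1$. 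Conversely, adjoining $x$ to any family of $\dcy_A(I)$ elements of $A$ that generate $A$ over $I$ produces a generating set of $G$, so $\dcy(I)\le \dcy_A(I)+1$; combined with $\dcy(I)=m$ this pins down $\dcy_A(I)=m-1$.

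For the first assertion, I would exhibit the option in $\mathcal{E}_{m-1}$ by adjoining the reflection $x$. Since $x$ is an involution, $\lceil I,x\rceil$ has even order and $X_{\lceil I,x\rceil}$ is even. Because $x$ together with $m-1$ suitable elements of $A$ generates $G$, while Proposition~\ref{prop:setoptions} forbids a deficiency drop larger than one, this option has deficiency exactly $m-1$. For the second assertion, I would first observe the even/odd dichotomy of options: adjoining an element of $A$ keeps the resulting class inside $A$ and hence odd, so every \emph{even} option arises from adjoining some $g=xa\in G\setminus A$. For any such $g$, choosing $m-1$ elements of $A$ that generate $A$ over $I$ and using $\langle A,xa\rangle=G$ gives $\dcy(\lceil I,xa\rceil)\le m-1$; combined with the lower bound from Proposition~\ref{prop:setoptions}, every even option has deficiency exactly $m-1$, so none lies in $\mathcal{E}_m$.

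The main obstacle is the reduction step establishing $\dcy(I)=\dcy_A(I)+1$, in particular the bookkeeping in applying Lemma~\ref{lem:twoinvolutionstransform} to convert all-but-one reflection into elements of $A$ without altering the generated subgroup. Once this identity and the even/odd dichotomy of options are in hand, both conclusions follow from elementary counting together with Proposition~\ref{prop:setoptions}.
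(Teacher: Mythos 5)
Your proof is correct, but it takes a genuinely different route from the paper's. The paper proves the ``no option in $\mathcal{E}_m$'' half by contradiction on a minimal $m$: it invokes Lemma~\ref{lem:GENoddsmalleroddoptions} to produce an odd option $X_{\lceil I,g\rceil}\in\mathcal{O}_{m-1}$, uses minimality to conclude that this class has no option in $\mathcal{E}_{m-1}$, hence $X_{\lceil I,g,y\rceil}\in\mathcal{E}_{m-2}$, and then derives a contradiction with Proposition~\ref{prop:setoptions} because that class is an option of the supposed $X_J\in\mathcal{E}_m$; the existence half then falls out, since adjoining any $h\in G\setminus A$ must land in $\mathcal{E}_{m-1}\cup\mathcal{E}_m$. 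You instead prove the quantitative identity $\dcy(I)=\dcy_A(I)+1$ for the subgroup $I\le A$ --- in effect a relative version of Proposition~\ref{prop:NumberOfGenerators}, established by the same generator-exchange technique (repeated Lemma~\ref{lem:twoinvolutionstransform} followed by Lemma~\ref{lem:DihASubgroups}) that the paper uses there --- and then compute directly that every even option of $X_I$ has deficiency exactly $m-1$, the upper bound coming from $\langle A,xa\rangle=G$ and the lower bound from Proposition~\ref{prop:setoptions}. Your route buys three things: it avoids the minimal-counterexample induction and all dependence on Lemma~\ref{lem:GENoddsmalleroddoptions}; it yields the stronger conclusion that \emph{every} even option lies in $\mathcal{E}_{m-1}$; and it works for arbitrary $m\ge 1$, not just $m\in\{1,2,3\}$. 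What the paper's route buys is economy: with Lemma~\ref{lem:GENoddsmalleroddoptions} and Proposition~\ref{prop:setoptions} already in hand, its argument is shorter and does not repeat the generator-exchange bookkeeping already spent in the proof of Proposition~\ref{prop:NumberOfGenerators}.
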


\begin{proof}
Since $X_I$ is odd, $I$ is a subgroup of $A$.
So $X_{\lceil I,h\rceil}$ is an option of $X_I$ for any $h\in G\setminus A$, and this option is in $\mathcal{E}_{m-1}$ or $\mathcal{E}_{m}$.
It remains to show that $X_I$ has no option in $\mathcal{E}_{m}$.

We argue by contradiction.  Assume that $X_I \in \mathcal{O}_m$ has an option $X_J \in \mathcal{E}_m$ and $m$ is as small as possible with this property.
Let $J=\lceil I,y \rceil$ for some $y\in G\setminus A$. 

We first consider the case where $m=1$ and $X_I \in \mathcal{O}_1$. 
Then $X_{\lceil I, g \rceil} \in \mathcal{D}_0$ for some $g \in G\setminus A$.  
Then $G=\langle I,g \rangle$, so $I=A$ by Lemma~\ref{lem:DihASubgroups}.
Thus $X_J=X_{\lceil A,y \rceil}=X_G \in \mathcal{E}_0$, which is a contradiction. 

Now we consider the case when $m\ge 2$, as shown in Figure \ref{fig:oddevenoptions}.
By Lemma \ref{lem:GENoddsmalleroddoptions}, there is a $g \in G$ such that $X_{\lceil I, g\rceil} \in \mathcal{O}_{m-1}$.  
Since $m$ is minimal, $X_{\lceil I, g\rceil}$ has no option in $\mathcal{E}_{m-1}$, and so  $X_{\lceil I, g, y\rceil} \in \mathcal{E}_{m-2}$.
This is a contradiction, since $X_{\lceil I, g, y\rceil}$ is an option of $X_J$ and $X_J$ does not have an option in $\mathcal{E}_{m-2}$ by
Proposition \ref{prop:setoptions}.

\begin{figure}
\includegraphics{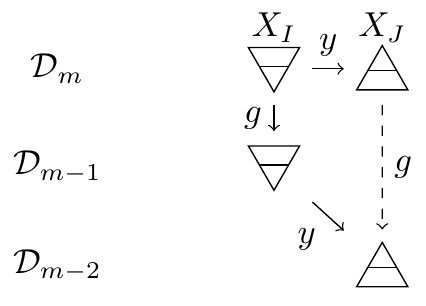}
\caption{\label{fig:oddevenoptions}The $m\ge 2$ case of the proof of Lemma~\ref{lem:GENoddevenoptions}.}
\end{figure}
\end{proof}

\begin{lemma}\label{lem:oddGENoddtypes}
If $G=\dih(A)$ where $A$ is an abelian group of odd order and $\spr(A)=2$, then 
\[
   \type(X_I) = 
    \begin{dcases}
       (1,2,1) & \text{if } X_I \in \mathcal{O}_1\\
       (1,3,0) & \text{if } X_I \in \mathcal{O}_2\\
       (1,3,1) & \text{if } X_I \in \mathcal{O}_{3}.
    \end{dcases}
\] 
\end{lemma}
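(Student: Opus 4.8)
The plan is to compute each type by structural induction, using the option structure established in the two preceding lemmas together with the type-calculation rules encoded in Figure~\ref{fig:type}. Recall that $\type(X_I)$ is determined by $\pty(I)$ and by $\otype(X_I)$, the set of types of options of $X_I$. Since every $X_I$ here is odd, I work through the three deficiency levels $\mathcal{O}_3$, $\mathcal{O}_2$, $\mathcal{O}_1$ in order, and at each level I must pin down exactly which option-types occur. The crucial inputs are: Lemma~\ref{lem:GENoddsmalleroddoptions} guarantees an odd option one deficiency-level lower; Lemma~\ref{lem:GENoddevenoptions} guarantees an even option exactly one level lower and forbids an even option at the \emph{same} level; and Proposition~\ref{prop:setoptions} confines all options to deficiencies $m$ and $m-1$. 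The even options will have their types supplied by Proposition~\ref{prop:GENeventypes}.

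First I would handle the base case $X_I \in \mathcal{O}_1$. By Proposition~\ref{prop:setoptions} its options lie in $\mathcal{D}_0 \cup \mathcal{D}_1$; the only deficiency-$0$ class is $X_G \in \mathcal{E}_0$ with type $(0,0,0)$, and Lemma~\ref{lem:GENoddevenoptions} tells me $X_I$ has an option in $\mathcal{E}_0$ but none in $\mathcal{E}_1$. So the even options are exactly $X_G$. Any other options sit in $\mathcal{O}_1$ with the (inductively assumed) type $(1,2,1)$. Thus $\otype(X_I) \subseteq \{(0,0,0),(1,2,1)\}$ and contains $(0,0,0)$; feeding these into the type computation for an odd class yields $\type(X_I)=(1,2,1)$, consistent with the guess and closing the induction at this level. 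The $\mathcal{O}_2$ and $\mathcal{O}_3$ cases proceed identically: for $X_I \in \mathcal{O}_m$ I collect the even option from $\mathcal{E}_{m-1}$ (type read from Proposition~\ref{prop:GENeventypes}: $(0,1,2)$ for $m=2$, $(0,0,2)$ for $m=3$), the guaranteed odd option in $\mathcal{O}_{m-1}$ (type already computed: $(1,2,1)$ for $m=2$, $(1,3,0)$ for $m=3$), plus possibly same-level odd options of type equal to the one being verified. In each case Lemma~\ref{lem:GENoddevenoptions} forbids an even option in $\mathcal{E}_m$, so no stray even type contaminates $\otype(X_I)$, and the $\mex$ computation returns the claimed triple.

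I expect the main obstacle to be bookkeeping rather than conceptual: I must verify that the type computation genuinely depends only on the \emph{set} $\otype(X_I)$ and not on multiplicities, and that the presence-or-absence of a same-level option (type $\type(X_I)$ itself) does not change the output—this is precisely the self-consistency that makes structural induction legitimate here, and it mirrors the $\otype$ bifurcations displayed in the proof of Proposition~\ref{prop:GENeventypes}. Concretely, for each $m$ I should exhibit the two possibilities $\otype(X_I) = \{(\text{even type}),(\text{odd type}_{m-1})\}$ and $\otype(X_I) = \{(\text{even type}),(\text{odd type}_{m-1}),(\text{odd type}_m)\}$, depending on whether $\opt(X_I) \cap \mathcal{O}_m = \emptyset$, and check that both give the asserted $\type(X_I)$. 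The one point requiring genuine care is confirming that no option of a different odd deficiency or unexpected even deficiency can appear, which is exactly what Proposition~\ref{prop:setoptions} (confining deficiencies to $\{m,m-1\}$) and Lemma~\ref{lem:GENoddevenoptions} (pinning the even options to level $m-1$) jointly rule out.
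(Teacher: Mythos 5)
Your proposal is correct and takes essentially the same approach as the paper: structural induction on the structure classes, with Proposition~\ref{prop:setoptions} and Lemmas~\ref{lem:GENoddsmalleroddoptions} and~\ref{lem:GENoddevenoptions} pinning down the possible options, Proposition~\ref{prop:GENeventypes} supplying the even option types, and a two-case computation of $\otype(X_I)$ according to whether $\opt(X_I)\cap\mathcal{O}_m$ is empty, exactly as in the paper. The ``self-consistency'' issue you flag is resolved just as you suggest: same-deficiency options correspond to strictly larger intersection subgroups, so the structural induction legitimately assumes their types, and one checks that both possible sets $\otype(X_I)$ yield the claimed triple.
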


\begin{proof}
We will use structural induction on the structure classes. Proposition \ref{prop:setoptions} and Lemmas \ref{lem:GENoddsmalleroddoptions} 
and \ref{lem:GENoddevenoptions} determine the possible options of our structure classes. 
Proposition~\ref{prop:GENeventypes} and induction yields the types of these options.  

If $X_I\in\mathcal{O}_1$, then $\opt(X_I)\cap \mathcal{E}_0\ne \emptyset$ and $\opt(X_I)\subseteq\mathcal{E}_0\cup \mathcal{O}_1$.  
So $\type(X_I)=(1,2,1)$ since
\[
\otype(X_I)=
\begin{dcases}
\{(0,0,0)\}          & \text{if } \opt(X_I)\cap \mathcal{O}_1 = \emptyset \\
\{(0,0,0),(1,2,1)\}  & \text{otherwise}.
\end{dcases}
\]

If $X_I\in\mathcal{O}_2$, then $\opt(X_I) \cap \mathcal{E}_1 \ne \emptyset\ne \opt(X_I)\cap \mathcal{O}_1$ 
and $\opt(X_I)\subseteq\mathcal{E}_1\cup\mathcal{O}_1\cup \mathcal{O}_2$. 
So $\type(X_I)=(1,3,0)$ since
\[
\otype(X_I)=
\begin{dcases}
\{(0,1,2),(1,2,1)\}          & \text{if } \opt(X_I) \cap \mathcal{O}_2 = \emptyset \\
\{(0,1,2),(1,2,1),(1,3,0)\}  & \text{otherwise}.
\end{dcases}
\]

If $X_I\in\mathcal{O}_3$, then $\opt(X_I)\cap \mathcal{E}_2 \ne \emptyset\ne \opt(X_I)\cap \mathcal{O}_2$ 
and $\opt(X_I)\subseteq\mathcal{E}_2\cup\mathcal{O}_2\cup \mathcal{O}_3$. 
So $\type(X_I)=(1,3,1)$ since
\[
\otype(X_I)=
\begin{dcases}
\{(0,0,2), (1,3,0)\}         & \text{if } \opt(X_I) \cap \mathcal{O}_3 = \emptyset \\
\{(0,0,2),(1,3,0),(1,3,1)\}  & \text{otherwise}.
\end{dcases}
\]
\end{proof}

Figure~\ref{fig:genodd}(b) shows our conjectured simplified structure diagram that corresponds to the next result.

\begin{proposition}\label{prop:Odd2Generated}
If $A$ is an abelian group of odd order such that $d(A)=2$, then $\gen(\dih(A))=*3$.
\end{proposition}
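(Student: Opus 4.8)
The plan is to read the answer straight off the type of the Frattini structure class, exploiting the fact recorded in the preliminaries that $\nim(\gen(G))$ equals the second (even) component of $\type(X_{\Phi(G)})$, where $G=\dih(A)$. So the whole argument reduces to locating $X_{\Phi(G)}$ inside the deficiency stratification and then invoking Lemma~\ref{lem:oddGENoddtypes}; no new game analysis is needed because the combinatorial content is already packaged in that lemma.

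First I would pin down the deficiency. Since $d(A)=2$, Proposition~\ref{prop:NumberOfGenerators} gives $d(G)=d(\dih(A))=d(A)+1=3$, so $\spr(\dih(A))=3$, and Proposition~\ref{prop:frattinilocation} places $X_{\Phi(G)}$ in $\mathcal{D}_3$. Next I would determine the parity of the Frattini subgroup: by Proposition~\ref{prop:FrattiniInclusion} we have $\Phi(G)=\Phi(A)$, and $\Phi(A)\le A$ has odd order because $A$ has odd order. Hence $\Phi(G)$ is odd, and combining this with the deficiency computation yields $X_{\Phi(G)}\in\mathcal{O}_3$.

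With $X_{\Phi(G)}\in\mathcal{O}_3$ in hand, Lemma~\ref{lem:oddGENoddtypes} gives $\type(X_{\Phi(G)})=(1,3,1)$. The starting position $\emptyset$ is even, so $\nim(\gen(G))=\nim(\emptyset)$ is the even, i.e.\ second, component of this triple, namely $3$. Therefore $\gen(\dih(A))=*3$, as claimed. The only point requiring care — rather than any real obstacle — is the bookkeeping of parity: one must match the even parity of $\emptyset$ to the correct slot of the type triple and use the odd order of $A$ to land in $\mathcal{O}_3$ (the $(1,3,1)$ case) rather than in an even class, since an even Frattini subgroup would instead route through Proposition~\ref{prop:GENeventypes} and give a different nim-value.
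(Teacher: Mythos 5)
Your proof is correct and follows essentially the same route as the paper's: both locate $X_{\Phi(G)}$ in $\mathcal{O}_3$ via Propositions~\ref{prop:NumberOfGenerators}, \ref{prop:frattinilocation}, and~\ref{prop:FrattiniInclusion}, then read off the nim-number as the second component of the type $(1,3,1)$ supplied by Lemma~\ref{lem:oddGENoddtypes}. Your added remark about matching the even parity of $\emptyset$ to the second slot of the type triple is exactly the bookkeeping the paper relies on implicitly.
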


\begin{proof}
By Proposition~\ref{prop:frattinilocation}, we have $X_{\Phi(G)} \in \mathcal{D}_3$ since $\spr(\dih(A))=3$.   
By Lemma~\ref{prop:FrattiniInclusion}, we have $\Phi(G)=\Phi(A) \leq A$, so $\Phi(G)$ is odd.  
Therefore,  $X_{\Phi(G)} \in  \mathcal{O}_3$, so $\type(X_{\Phi(G)})=(1,3,1)$ by Lemma~\ref{lem:oddGENoddtypes}.
The second component is $3$, so we conclude that $\gen(\dih(A))=*3$. 
\end{proof}

We conclude with our main result, which summarizes Propositions~\ref{prop:DihedralResults},~\ref{prop:ManyGenerated},~\ref{prop:Even2Generated}, and~\ref{prop:Odd2Generated}. 

\begin{theorem}
If $A$ is an abelian group, then 
\[
\gen(\dih(A))=
\begin{dcases}
*1 & \text{if } \spr(A)=1, |A|\equiv_4 2\\
*3 & \text{if } 1\le\spr(A)\le2, A \text{ odd}\\
*0 & \text{otherwise}. 
\end{dcases}
\]

\end{theorem}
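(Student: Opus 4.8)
The plan is to treat this final statement as a bookkeeping synthesis of the four preceding propositions, organized as a case analysis on $\spr(A)$. Since $A$ is a nontrivial finite abelian group, exactly one of $\spr(A)=1$, $\spr(A)=2$, or $\spr(A)\geq 3$ holds, and this already gives a complete partition of the groups to work from. First I would dispose of the largest bucket: if $\spr(A)\geq 3$, then Proposition~\ref{prop:ManyGenerated} immediately gives $\gen(\dih(A))=*0$, independent of the parity of $A$, and this lands in the ``otherwise'' clause of the formula.

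Next I would handle $\spr(A)=2$ by splitting on the parity of $A$: if $A$ is even, Proposition~\ref{prop:Even2Generated} gives $*0$ (again ``otherwise''), while if $A$ is odd, Proposition~\ref{prop:Odd2Generated} gives $*3$, matching the clause $1\leq\spr(A)\leq 2$ with $A$ odd. Finally, for $\spr(A)=1$ the group $A$ is cyclic of some order $n$, and I would apply Proposition~\ref{prop:DihedralResults}: when $n\equiv_2 1$ (equivalently $A$ odd) the value is $*3$, matching the same $1\leq\spr(A)\leq 2$, $A$ odd clause; when $n\equiv_4 2$ the value is $*1$, matching the first clause; and when $n\equiv_4 0$ the value is $*0$, falling again into ``otherwise.'' Collecting these outputs reproduces the three-way formula in the statement.

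The only real work is verifying that the mixed conditions in the theorem, which phrase some cases through $\spr(A)$ and others through $|A|\bmod 4$, partition the groups exactly the way the propositions do. In particular I would confirm that the ``otherwise'' clause collects precisely the three $*0$ regions, namely cyclic $A$ with $|A|\equiv_4 0$, even $A$ with $\spr(A)=2$, and all $A$ with $\spr(A)\geq 3$, without absorbing any $*1$ or $*3$ case, and that the first clause $\spr(A)=1$, $|A|\equiv_4 2$ is the unique source of $*1$. The subtlety worth flagging here is that being odd is not by itself enough to force $*3$: an odd $A$ with $\spr(A)\geq 3$ still yields $*0$ via Proposition~\ref{prop:ManyGenerated}, so the $A$ odd clause must be guarded by $\spr(A)\leq 2$. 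I would also record the standing assumption that $A$ is nontrivial, since $\spr(A)\geq 1$ is used implicitly throughout and the trivial case sits outside the scope of the formula.
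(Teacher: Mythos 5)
Your proposal is correct and matches the paper exactly: the paper states this theorem as a direct summary of Propositions~\ref{prop:DihedralResults}, \ref{prop:ManyGenerated}, \ref{prop:Even2Generated}, and~\ref{prop:Odd2Generated}, with no further argument beyond the case bookkeeping you carry out. Your attention to the partition of cases (and the guard $\spr(A)\le 2$ on the odd clause) is exactly the verification the paper leaves implicit.
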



\section{Further Questions}


\begin{figure}
\begin{tabular}{ccccccc}
\includegraphics[scale=0.72]{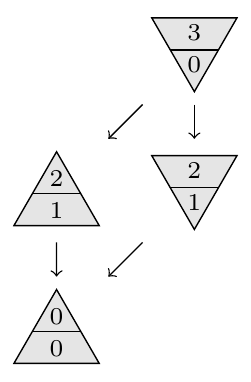} &  & \includegraphics[scale=0.72]{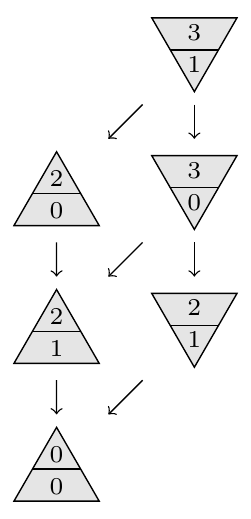} &  & \includegraphics[scale=0.72]{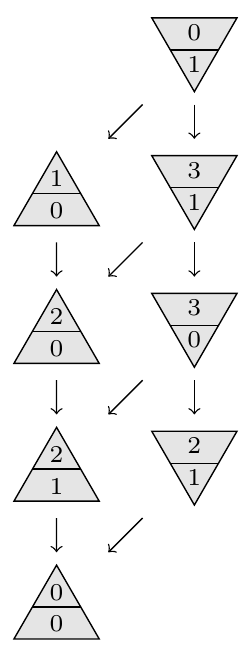} &  & \includegraphics[scale=0.72]{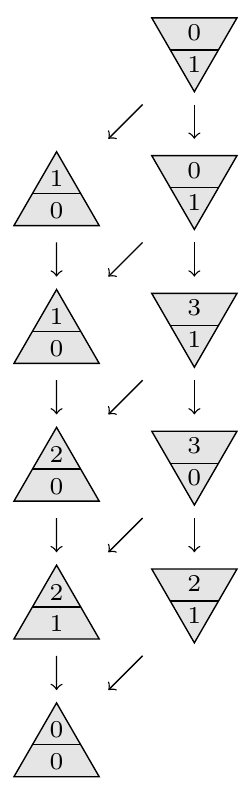}\tabularnewline
(a) $\spr(A)=1$ &  & (b) $\spr(A)=2$ &  & (c) $\spr(A)=3$ &  & (d) $\spr(A)\ge4$\tabularnewline
\end{tabular}

\protect\caption{\label{fig:genodd}Conjectured simplified structure diagrams for $\protect\gen(\protect\dih(A))$
with odd $A$.}
\end{figure}

\begin{figure}
\begin{tabular}{ccccccccc}
\includegraphics[scale=0.72]{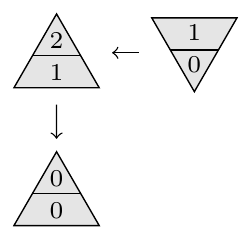} &  & \includegraphics[scale=0.72]{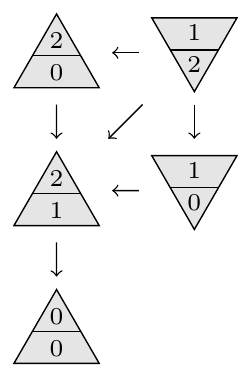} &  & \includegraphics[scale=0.72]{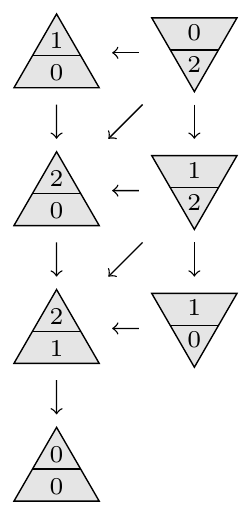} &  & \includegraphics[scale=0.72]{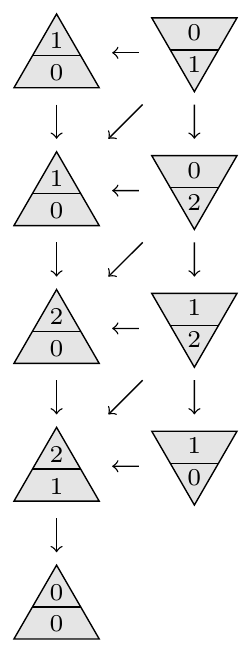} &  & \includegraphics[scale=0.72]{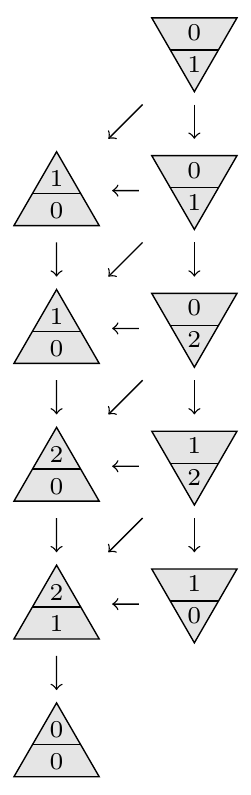}\tabularnewline
(a) $\spr(P)=0$ &  & (b) $\spr(P)=1$  &  & (c) $\spr(P)=2$ &  & (d) $\spr(P)=3$ &  & (d) $\spr(P)\ge4$\tabularnewline
\end{tabular}

\protect\caption{\label{fig:genevenpow1}Conjectured simplified structure diagrams for $\protect\gen(\protect\dih(A))$
with $A=\mathbb{Z}_{2}\times P$ and $P$ odd.}
\end{figure}

\begin{figure}
\begin{tabular}{cccccccccc}
\includegraphics[scale=0.72]{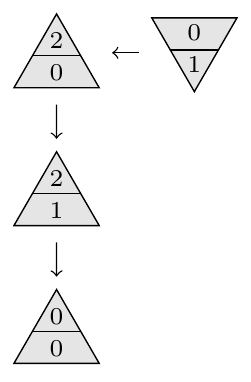} &  & \includegraphics[scale=0.72]{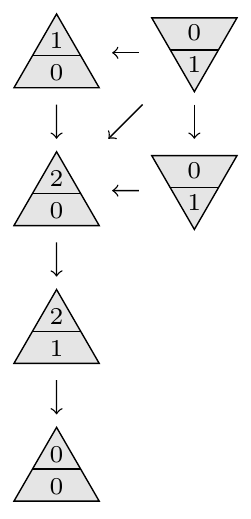} &  & \includegraphics[scale=0.72]{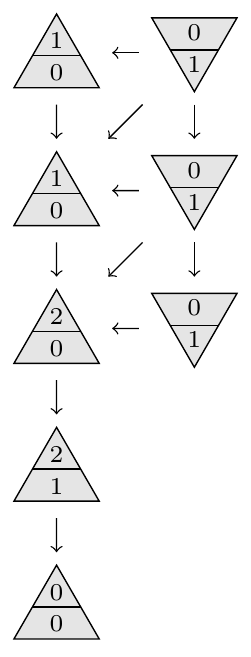} & &
\includegraphics[scale=0.72]{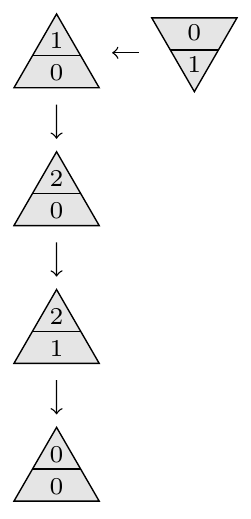} &  & \includegraphics[scale=0.72]{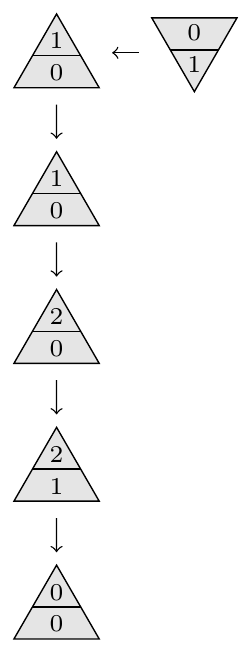}\tabularnewline
(a) $k=2$          &  & (b) $k=2$  &  & (c) $k=2$ & & (d) $k=3$ &  & (e) $k\ge4$\tabularnewline
$0\le\spr(P)\le1$     &  &     $\spr(P)=2$ &  &     $\spr(P)\ge3$ & \phantom{$\int_a^b$} &  &  &            \tabularnewline
\end{tabular}
\caption{\label{fig:genevenpow23}Conjectured simplified structure diagrams for $\protect\gen(\protect\dih(A))$
with $A=\mathbb{Z}_{2}^{k}\times P$ and $P$ odd.}
\end{figure}

We outline a few open problems.

\begin{enumerate}
\item The simplified structure diagrams in Figures \ref{fig:genodd}--\ref{fig:genevenpow23} were found via computer calculations by
considering several examples of $\gen(\dih(A))$. We conjecture that these figures cover all possible structure diagrams.  

\item We believe that the key new insight of this paper is the use of deficiency. Can we use this tool to handle other semidirect products?

\item Can we extend the techniques in this paper to give a complete description of the nim-numbers of $\gen(G)$ using covering properties as was done in~\cite{BeneshErnstSiebenDNG} for $\dng(G)$?
It is likely that $d(G)$ plays a role in this characterization.
\end{enumerate}

\bibliographystyle{plain}
\bibliography{game}

\end{document}